
\documentclass[preprint,12pt]{elsarticle}




\usepackage{ amsmath , amssymb,latexsym}
\usepackage{amsthm}
\usepackage{hyperref}
\hypersetup{pdftex,colorlinks=true,allcolors=blue}
\usepackage{hypcap}
\usepackage[all]{xy}
\xyoption{all}
\usepackage{graphicx}
\usepackage[font=small,labelfont=bf]{caption} 

\newcommand{\nc}{\newcommand}
\nc{\Aut}{{\mbox{Aut}}}
\nc{\Deck}{{\mbox{Deck}}}
\DeclareMathOperator{\deck}{Deck}
\nc{\bC}{{\mathbb{C}}}
\nc{\bP}{{\mathbb{P}}}
\nc{\bZ}{{\mathbb{Z}}}
\newtheorem{theorem}{Theorem}
\newtheorem{lemma}{Lemma}
\newtheorem{remark}{Remark}
\newtheorem{corollary}{Corollary}

\journal{Pure and Applied Algebra}

\begin{document}

\begin{frontmatter}



\title{Periods of generalized Fermat curves}


\author[1]{Yerko Torres-Nova}
\ead{yerko.torresn@gmail.com}

\address{Departamento de Matem\'aticas, Pontificia Universidad Cat\'olica de Chile, Santiago, Chile}

\begin{abstract}
Let $k,n \geq 2$ be integers. A generalized Fermat curve of type $(k,n)$ is a compact Riemann surface $S$ that admits a subgroup of conformal automorphisms $H \leq \Aut(S)$ isomorphic to $\mathbb{Z}_k^n$, such that the quotient surface $S/H$ is biholomorphic to the Riemann sphere $\hat{\mathbb{C}}$ and has $n+1$ branch points, each one of order $k$. There exists a good algebraic model for these objects, which makes them easier to study. Using tools from algebraic topology and integration theory on Riemann surfaces, we find a set of generators for the first homology group of a generalized Fermat curve. Finally, with this information, we find a set of generators for the period lattice of the associated Jacobian variety.

\end{abstract}

\begin{keyword}
Complex Geometry \sep Riemann Surfaces \sep Jacobian Variety \sep Generalized Fermat Curve


\MSC 30F10 \sep 32G20

\end{keyword}

\end{frontmatter}

\section{Introduction}
\label{intro}
The Jacobian variety $JS$ of a compact Riemann surface $S$ of genus $g$ is isomorphic to a complex torus of dimension $g$, i.e., a quotient $\mathbb{C}^g/ \Lambda$, where $\Lambda  \subset \mathbb{C}^g$ is the period lattice ( $\Lambda \cong \mathbb{Z}^{2g}$ ) of $S$ that depends on the analytical and algebraic-topological structure of $S$.
The importance of $JS$ is due to Torelli's theorem, which states that the principally polarized abelian variety $JS$ determines the Riemann surface $S$ up to biholomorphism.

Thus, if the Jacobian variety is in the form $\mathbb{C}^g/ \Lambda$, the period lattice $\Lambda$ with the corresponding polarization determines $S$.
However, to find an explicit form for the period lattice of a particular compact Riemann surface is a difficult task and there is no  standard method to do it. 

We restrict attention to an interesting family of compact Riemann surfaces called generalized Fermat curves of type $(k,n)$, where $k,n \geq 2$ are integers. In \cite{GHL} it was noticed that such a Riemann surface can be described as a suitable fiber product of $(n-1)$ classical Fermat curves of degree $k$. 
In this paper we find a generating set for the period lattice of a generalized Fermat curve, based on the work of Rohrlich \cite{GR} who found a generating set for the period lattice of the classical Fermat curve of degree $k \geq 4$.

\section{Preliminaries}

\label{sec:1}
\subsection{The Jacobian variety}
Let $S$ be a compact Riemann surface of genus $g\geq 0$. Its first homology group $H_1(S , \bZ)$ is a free Abelian group of rank $2g$, and the complex vector space $H^{1,0}(S)$ of its holomorphic $1$-forms has dimension $g$. There is a natural $\bZ$-linear injective map
$$\tau : H_1(S, \bZ) \hookrightarrow (H^{1,0}(S))^{*} $$
$$\gamma \mapsto \tau(\gamma)(\cdot) : = \int_{\gamma} \cdot,  $$

\noindent where  $(H^{1,0}(S))^{*}$ is the dual space of  $H^{1,0}(S)$. The image $\tau(H_1(S, \bZ))$ is a lattice in $(H^{1,0}(S))^{*}$, and the quotient $g$-dimensional torus
$$JS := (H^{1,0}(S))^{*}/\tau(H_1(S, \bZ))  $$
is called the \emph{Jacobian variety} of $S$. It is a fact that $JS$ admits a principal polarization defined by the Hermitian form on $H^{1,0}(S)$ given by
$$(\omega_1 , \omega_2) \to \int \omega_1 \wedge \overline{\omega_2} .$$

If $\{ \omega_1 , \dotso, \omega_g\}$ is a basis for $H^{1,0}(S)$, then we have the isomorphism $(H^{1,0}(S))^{*} \cong \bC^g$, and if $ \{ \gamma_1,...,\gamma_m\} $ is a finite generating set for $H_1(S, \bZ)$ (need not be a basis), then we can see $\tau(H_1(S, \bZ))$ as the lattice $\Lambda$ in $\bC^g$  generated by the collection
$$C_i = \left(\int_{\gamma_i} \omega_1, \int_{\gamma_i} \omega_2, \dotso, \int_{\gamma_i} \omega_g  \right),\hspace{0.2cm} 1 \leq i \leq m .$$
\noindent The lattice generated by the  $C_i `s$ is called the \emph{period lattice} of $S$, and in the case where $m$ is the rank $2g$ of $H_1(S , \bZ)$, we can find the Riemann matrix of $S$, which allows us to study $JS$ as a polarized Abelian variety.

\subsection{ Generalized Fermat curves}
Let $k,n \geq 2$ be integers. A compact Riemann surface $S$ is called a \emph{generalized Fermat curve} of type $(k,n)$ if it admits a subgroup of conformal automorphisms $H \leq \Aut(S)$ that is isomorphic to $\mathbb{Z}_k^n$ (where $\bZ_k = \bZ/k\bZ$), such that the quotient surface $S/H$ is biholomorphic to the Riemann  sphere $\hat{\bC}$ and has $n+1$ branch points, each one of order $k$. In this case the subgroup $H$ is called a \emph{generalized Fermat group} of type $(k,n)$, and the pair $(S,H)$ is called a \emph{generalized Fermat pair of type $(k,n)$}. As a consequence of the Riemann-Hurwitz formula given in Corollary 1.2 of \cite{KZ1} or Proposition 1.2 of \cite{KZ2}, the genus $g_{k,n}$ of a generalized Fermat curve of type $(k,n)$ is
$$g_{k,n} = \frac{2 + k^{n-1}((n-1)(k-1) -2)}{2}.$$

We say that two generalized Fermat pairs $(S_1 , H_1)$ and $(S_2,H_2)$ are \emph{holomorphically equivalent} if there exists a biholomorphism $f:S_1 \to S_2$ such that $fH_1 f^{-1} = H_2$. \\

\begin{remark}
Note that generalized Fermat curves of type $(k,1)$ are just cyclic covers of degree $k$ of $\hat{\bC}$ with two branch points, which are all of genus $0$.
\noindent From \cite{TZ} we know that the Fermat curve of degree $k\geq 2$ given by
 $$\{[x_0,x_1,x_2] \in \bP^2\bC : x_0^k + x_1^k + x_2^k=0\}$$ has a subgroup of conformal automorphisms isomorphic to $\bZ_k^2$, where the quotient surface is biholomorphic to the Riemann sphere with three branch points $\infty,0,1$. Thus the classical Fermat curves are generalized Fermat curves of type $(k,2)$.
\end{remark}

\begin{remark}
The non-hyperbolic case, i.e., when $g_{k,n} \leq 1$, are given by $(k,n) \in \{(2,2) , (2,3) , (3,2)\}$, or $k=1$. See \cite{GHL} for explicit examples. \\
\end{remark}

Let $(S,H)$ be a generalized Fermat pair of type $(k,n)$ and, up to a Moebius transformation, let $\{\infty, 0,1,\lambda_1, \lambda_2,..., \lambda_{n-2} \}$ be the branch points of the quotient $S/H$. Let us consider the following fiber product of $n-1$ classical Fermat curves:
\begin{equation} \label{eq:typekn}
C_k(\lambda_1,..., \lambda_{n-2}) := 
\left\{\begin{array}{ccc} x_0^k + x_1^k + x_2^k & = &0 \\ \lambda_1 x_0^k + x_1^k + x_3^k & = &0 \\ \lambda_2 x_0^k + x_1^k + x_4^k & = &0
\\ \vdots & \vdots & \vdots \\ \lambda_{n-2} x_0^k + x_1^k + x_{n}^k & = &0  \end{array} \right\} \subset \bP^n \bC.
\end{equation}

Since the values $\lambda_i$ are pairwise different and each one is different from $0$ and $1$, the algebraic curve $C_k(\lambda_1,...,\lambda_{n-2})$ is a non-singular projective algebraic curve, hence a compact Riemann surface.\\

On $C_k(\lambda_1,...,\lambda_{n-2})$ we have the abelian group $H_0 \cong \bZ_k^n$ of conformal automorphisms generated by the maps
$$a_i([x_0, \cdots, x_n]) = [x_0, \cdots , x_{i-1} ,\zeta_k x_i , x_{i+1}, \cdots ,x_n],\quad i=0,...,n, $$
where $\zeta_k = e^{2\pi i/k }$. Let us consider the holomorphic map of degree $k^n$
$$\begin{array}{cccc}\pi : & C_k(\lambda_1,...,\lambda_{n-2})& \to &\hat{\bC} \\ & [x_0, \cdots , x_n] & \mapsto & - \left(\frac{x_1}{x_0}\right)^k,\end{array} $$

\noindent with the property $\pi \circ a_i = \pi$ for each $i=1,\dotso,n$. So $\pi$ induces a biholomorphism

$$\begin{array}{cccc}\hat{\pi}: & C_k(\lambda_1,...,\lambda_{n-2})/H_0& \to &\hat{\bC} \\ &H_0 p& \mapsto & \pi(p).\end{array} $$

\noindent  Furthermore, the map $\pi$ has $n+1$ branch points given by  $$\{ \infty, 0,1, \lambda_1,\lambda_2,...,\lambda_{n-2} \}.$$ 
It follows that $C_k(\lambda_1,...,\lambda_{n-2})$ is a generalized Fermat curve of type $(k,n)$ with generalized Fermat group $H_0$, whose standard generators are $a_1, \dotso, a_n$ and $a_{0} = (a_1a_2\dotso a_{n})^{-1}$.  Using the above notation, the following result was proved in \cite{GHL}.

\begin{theorem}
The generalized Fermat pairs $(S,H)$ and $(C_k(\lambda_1,...,\lambda_{n-2}) , H_0)$ are holomorphically equivalent.
\end{theorem}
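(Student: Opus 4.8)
\medskip
\noindent\emph{Proof strategy.} The plan is to reduce the statement to a rigidity property of the monodromy of a $\bZ_k^n$-Galois cover of the sphere branched over $n+1$ points. After the normalization already fixed, $S/H$ is identified via a Moebius transformation with $\hat{\bC}$ carrying the branch set $B=\{\infty,0,1,\lambda_1,\dotsb,\lambda_{n-2}\}$ --- the very set used to build $C:=C_k(\lambda_1,\dotsb,\lambda_{n-2})$ --- so that $S\to S/H$ and $C\to C/H_0=\hat{\bC}$ are Galois covers with the same base and the same branch locus; write $p_0,\dotsb,p_n$ for these points in this order. First I would restrict $S\to S/H$ over $\hat{\bC}\setminus B$ to obtain a connected Galois covering, hence a surjective monodromy homomorphism $\rho_S\colon\pi_1(\hat{\bC}\setminus B)\to H$; since $H$ is abelian it factors through $H_1(\hat{\bC}\setminus B,\bZ)\cong\bZ^{n+1}/\langle e_0+\dotsb+e_n\rangle$, where $e_i$ is the class of a small loop about $p_i$, and so (its values being $k$-torsion) the cover is encoded by the tuple $h_i:=\rho_S(e_i)\in H$ subject to: (i) $h_0+\dotsb+h_n=0$; (ii) the $h_i$ generate $H$ (connectedness); (iii) each $h_i$ has order $k$ (each $p_i$ is a branch point of order $k$). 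The same discussion applies to $C$, and I would check, using the defining equations of $C$, that its tuple is the \emph{standard} one: $a_1,\dotsb,a_n$ a basis of $H_0\cong\bZ_k^n$ with $a_0=-(a_1+\dotsb+a_n)$. Concretely, near a point over $0$ the function $x_1$ is a local coordinate and $\pi=-x_1^k$, so the monodromy about $0$ is $a_1$; similarly $x_0,x_2,x_{j+2}$ serve as local coordinates over $\infty,1,\lambda_j$, giving monodromies $a_0,a_2,a_{j+2}$.

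The heart of the argument is the claim that such a tuple is essentially unique: there is an isomorphism $\psi\colon H_0\to H$ with $h_i=\psi(a_i)$ for all $i$ \emph{simultaneously}. To prove it I would consider the surjections $\phi\colon\bZ_k^{n+1}\to H$, $f_i\mapsto h_i$, and $\phi_0\colon\bZ_k^{n+1}\to H_0$, $f_i\mapsto a_i$. By (i) both annihilate $v:=f_0+\dotsb+f_n$; since $v$ has order $k$ and $\langle v\rangle$ is a direct summand of $\bZ_k^{n+1}$ (it extends to a basis), one has $\bZ_k^{n+1}/\langle v\rangle\cong\bZ_k^n$, and comparing orders ($|H|=|H_0|=k^n$) forces $\ker\phi=\ker\phi_0=\langle v\rangle$. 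Hence $\phi$ and $\phi_0$ descend to isomorphisms $\bar\phi\colon\bZ_k^{n+1}/\langle v\rangle\to H$ and $\bar\phi_0\colon\bZ_k^{n+1}/\langle v\rangle\to H_0$, and $\psi:=\bar\phi\circ\bar\phi_0^{-1}$ does the job; notice that (iii) then comes for free, since the $a_i$ have order $k$ and $\psi$ is an isomorphism.

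Finally I would invoke the classification of (branched) coverings. From $h_i=\psi(a_i)$ one gets $\rho_S=\psi\circ\rho_C$, hence $\ker\rho_S=\ker\rho_C$, so the two unbranched coverings of $\hat{\bC}\setminus B$ agree up to isomorphism; such an isomorphism may be taken to be a biholomorphism over $\hat{\bC}\setminus B$, and it conjugates the group of deck transformations of $S$ onto that of $C$ compatibly with $\psi$. Since $S$ (resp.\ $C$) is the unique compact Riemann surface completing the corresponding punctured surface, this biholomorphism extends across $B$ to a biholomorphism $f\colon S\to C$ with $fHf^{-1}=H_0$, which is precisely the asserted holomorphic equivalence.

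I expect the main obstacle to be the rigidity claim of the middle paragraph --- the linear algebra over the ring $\bZ_k$ showing that any surjection $\bZ_k^{n+1}\to\bZ_k^n$ annihilating $f_0+\dotsb+f_n$ has kernel exactly $\langle f_0+\dotsb+f_n\rangle$ --- together with the bookkeeping identifying the monodromy of the fiber-product model $C$ about each branch point with the prescribed generator $a_i$; once these are in hand, the passage between covers and monodromy representations is routine.
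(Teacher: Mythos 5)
Your argument is correct, but note that the paper offers no proof of this statement at all: Theorem~1 is simply quoted from \cite{GHL}, so the real comparison is with the proof given there. What you propose is the monodromy-theoretic version of that argument. In \cite{GHL} the quotient orbifold ($\hat{\bC}$ with $n+1$ cone points of order $k$) is uniformized as $\mathbb{H}/\Gamma$ by a Fuchsian group $\Gamma$ of signature $(0;k,\dotsc,k)$, and the key step is that $\Gamma$ admits a \emph{unique} normal subgroup $K$ acting freely with $\Gamma/K\cong\bZ_k^n$, namely $K=[\Gamma,\Gamma]\Gamma^k$; both $S$ and $C_k(\lambda_1,\dotsc,\lambda_{n-2})$ are then $\mathbb{H}/K$ for this single $K$, and the equivalence of pairs follows. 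Your ``rigidity claim'' --- that a surjection $\bZ_k^{n+1}\to\bZ_k^n$ killing $v=f_0+\dotsb+f_n$ has kernel exactly $\langle v\rangle$, by the order count $k^{n+1}/k=k^n=|H|$ --- is precisely the abelian-group computation underlying that uniqueness, transplanted to $\pi_1(\hat{\bC}\setminus B)$ and the classification of coverings; what it buys you is a proof that avoids uniformization entirely and works uniformly in the non-hyperbolic cases $(k,n)\in\{(2,2),(2,3),(3,2)\}$ as well. Two small points to spell out in a written version: (a) the extension of the covering biholomorphism across the punctures rests on removability of isolated singularities of bounded holomorphic maps into a compact surface, and the extended map still intertwines the two group actions by continuity and density of $S'$ in $S$; (b) the explicit identification of the local monodromies of $C$ with $a_0,\dotsc,a_n$ is not actually needed for the theorem as stated --- your rigidity lemma shows that $\ker\rho_S=\ker\rho_C$ is the canonical subgroup $\ker\bigl(\pi_1(\hat{\bC}\setminus B)\to H_1(\hat{\bC}\setminus B,\bZ)\otimes\bZ_k\bigr)$ for \emph{any} connected $\bZ_k^n$-cover branched over $B$, and $fHf^{-1}=H_0$ already follows because $f$ conjugates the deck group of $S'\to\hat{\bC}\setminus B$ (which is $H$) onto that of $C'$ (which is $H_0$) --- though the computation is what matches the two groups generator by generator.
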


On $C_k(\lambda_1,...,\lambda_{n-2})$ we have the following meromorphic maps
$$y_j = \frac{x_{j}}{x_0} : C_k(\lambda_1,...,\lambda_{n-2}) \to \hat{\bC},\hspace{0.2cm} j = 1,...,n.$$
We consider the set $I_{k,n}$ of tuples $(\alpha_1, \dotso, \alpha_{n})$ such that
 $$ \alpha_i \in \bZ,\quad 0\leq \alpha_2,\dotso, \alpha_{n} \leq k-1,\quad 0 \leq \alpha_1 \leq \sum_{i=2}^{n}\alpha_i -2, $$
and define the meromorphic form
$$\theta_{\alpha_1,...,\alpha_n} := \frac{y_1^{\alpha_1} dy_1}{y_2^{\alpha_2} \dotso y_{n}^{\alpha_{n}}},$$
\noindent for each $(\alpha_1  , \dotso, \alpha_{n}) \in I_{k,n}$.The paper \cite{RH} proved the following.

\begin{theorem} With the above notation, the following holds:
\begin{enumerate}
\item $\theta_{\alpha_1,...,\alpha_{n}}$ is holomorphic for every $(\alpha_1,\dotso, \alpha_{n}) \in I_{k,n}$.
\item  $\# I_{k,n} = g_{k,n}$.
\item The collection $$\{ \theta_{\alpha_1,...,\alpha_{n}} \}_{(\alpha_1,...,\alpha_{n} )\in I_{k,n}}$$ is a basis for the space $H^{1,0}(C_k(\lambda_1,...,\lambda_{n-2}))$ of holomorphic $1$-forms.
\end{enumerate}
\end{theorem}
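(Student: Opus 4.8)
The plan is to reduce the three assertions to a single divisor computation on the branched cover $\pi$, followed by a combinatorial count and a character argument. Work in the affine chart $x_0=1$ and put $t:=-y_1^k=\pi$, so the defining equations become $y_1^k=-t$ and $y_j^k=t-b_j$ for $2\le j\le n$, where $b_2=1$ and $b_j=\lambda_{j-2}$ for $j\ge 3$; thus $C_k(\lambda_1,\dots,\lambda_{n-2})$ is the fiber product over the $t$-line of the cyclic covers $y_j^k=(\text{affine function of }t)$, and $\pi$ is branched exactly over $0,b_2,\dots,b_n,\infty$. Over a point lying above a finite branch value the corresponding $y_j$ ($y_1$ above $0$, $y_j$ above $b_j$) is a local uniformizer, the remaining $y_i$ being nonzero there; above $\infty$, where $x_0=0$, the function $u:=x_0/x_1$ is a uniformizer and each $y_i=x_i/x_0$ has a simple pole. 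Each such fiber consists of $k^{n-1}$ points, so $\operatorname{div}(y_1)=\sum_{P\mapsto 0}P-\sum_{P\mapsto\infty}P$ and $\operatorname{div}(y_j)=\sum_{P\mapsto b_j}P-\sum_{P\mapsto\infty}P$ for $j\ge2$. Differentiating $y_1^k=-t$ and $y_j^k=t-b_j$ gives $dy_1=-(y_j/y_1)^{k-1}\,dy_j$, so $dy_1$ vanishes to order $k-1$ at each of the $k^{n-1}$ points above each finite branch value $b_2,\dots,b_n$, has a double pole ($dy_1=-u^{-2}\,du$) at each point above $\infty$, and order $0$ elsewhere; the check $\deg\operatorname{div}(dy_1)=(n-1)k^{n-1}(k-1)-2k^{n-1}=2g_{k,n}-2$ confirms that no zero or pole has been missed.

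For part (1), from $\operatorname{div}(\theta_{\alpha_1,\dots,\alpha_n})=\alpha_1\operatorname{div}(y_1)-\sum_{j=2}^{n}\alpha_j\operatorname{div}(y_j)+\operatorname{div}(dy_1)$ one sees that this divisor is effective precisely when $\alpha_1\ge0$ (contribution above $0$), $k-1-\alpha_j\ge0$ for $2\le j\le n$ (contribution above $b_j$), and $\sum_{j\ge2}\alpha_j-\alpha_1-2\ge0$ (contribution above $\infty$) --- exactly the inequalities cutting out $I_{k,n}$. Hence each $\theta_\alpha$ with $\alpha\in I_{k,n}$ is a holomorphic $1$-form, and it is nonzero since it equals a nonzero meromorphic function times $dy_1\not\equiv0$.

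For part (2), I would count $I_{k,n}$ directly: $(\alpha_2,\dots,\alpha_n)$ ranges freely over $\{0,\dots,k-1\}^{n-1}$ and, for each choice, $\alpha_1$ contributes $\max\!\big(\sum_{i\ge2}\alpha_i-1,\,0\big)$ values, so $\#I_{k,n}=\sum_{\vec\alpha}\big(\sum_i\alpha_i-1\big)+1$, the $+1$ correcting the single tuple $\vec\alpha=\mathbf 0$; using $\sum_{\vec\alpha}\sum_i\alpha_i=(n-1)k^{n-2}\cdot\tfrac{k(k-1)}{2}$ this simplifies to $\tfrac{k^{n-1}((n-1)(k-1)-2)+2}{2}=g_{k,n}$. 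For part (3), since $\dim H^{1,0}(C_k(\lambda_1,\dots,\lambda_{n-2}))=g_{k,n}=\#I_{k,n}$ it suffices to prove $\mathbb C$-linear independence of the $\theta_\alpha$. Each is an eigenform for $H_0$: $a_1^{*}\theta_\alpha=\zeta_k^{\alpha_1+1}\theta_\alpha$ and $a_\ell^{*}\theta_\alpha=\zeta_k^{-\alpha_\ell}\theta_\alpha$ for $2\le\ell\le n$. Eigenforms with distinct characters are independent, so it is enough to treat one character class; there the tuples $(\alpha_2,\dots,\alpha_n)$ all agree and any two of the forms differ by a factor $y_1^{mk}=(-t)^m$, so the class has the shape $\{(-t)^m\theta_{\alpha^{(0)}}\}$, which is linearly independent because $1,t,t^2,\dots$ are and $\theta_{\alpha^{(0)}}\ne0$. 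Therefore $\{\theta_\alpha:\alpha\in I_{k,n}\}$ is a basis of $H^{1,0}$.

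I expect the main obstacle to be the ramification bookkeeping of the first paragraph: correctly identifying a uniformizer and the exact order of $dy_1$ at the points over $\infty$ (and over the finite branch values), and then leaning on the degree identity $\deg\operatorname{div}(dy_1)=2g_{k,n}-2$ to certify that the zeros and poles listed are all of them. The remaining steps are then routine: a point-by-point positivity check for (1), a finite summation for (2), and the standard eigenspace decomposition for (3).
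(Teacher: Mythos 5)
Your argument is correct and complete. Note that the paper itself offers no proof of this statement: it is imported verbatim from the reference \cite{RH}, so there is no in-text argument to compare against. Your route --- computing $\operatorname{div}(y_1)$, $\operatorname{div}(y_j)$ and $\operatorname{div}(dy_1)$ on the fiber product, checking effectiveness of $\operatorname{div}(\theta_{\vec\alpha})$ point by point over $0$, the $b_j$, and $\infty$, counting $I_{k,n}$ by the elementary summation, and then getting independence from the $H_0$-eigenspace decomposition together with the observation that forms sharing a character differ by powers of $(-t)$ --- is the standard one for such abelian covers and is a legitimate self-contained verification. The bookkeeping all checks out: the ramification index is $k$ at every point over a branch value, so $y_1$ (resp.\ $y_j$, resp.\ $u=1/y_1$) is a uniformizer over $0$ (resp.\ $b_j$, resp.\ $\infty$), the orders of $dy_1$ are $0$, $k-1$, $-2$ respectively, and the degree identity $(n-1)k^{n-1}(k-1)-2k^{n-1}=2g_{k,n}-2$ confirms nothing was missed. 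The only nitpick is the word ``precisely'' in part (1): effectiveness of the divisor does not by itself force $\alpha_j\geq 0$ for $j\geq 2$ (a very negative $\alpha_j$ could be compensated at $\infty$), but since you only need the implication $\vec\alpha\in I_{k,n}\Rightarrow\theta_{\vec\alpha}$ holomorphic, this is harmless.
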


For simplicity, in the rest of this paper we write $C_{k,n}$ instead of $C_k(\lambda_1,...,\lambda_{n-2})$.

\subsection{The logarithm symbol on the punctured plane} \label{logarith}

Let $R \subset \bC$ a finite subset with $0 \in R$ and $|R|\geq 2$. The elements of $R$ are denoted by $r_i$, with $1\leq i \leq |R|$. Then we consider a universal covering of $\bC -R$ given by
$$ p: U \to \bC -R. $$ 

Since $p$ is holomorphic, we have the family of holomorphic functions $p_i = p - r_i$ with $1\leq i \leq |R|$. The function $p_i$ does not vanish on $U$, so there exists a determination of $\log p_i$ on $U$ such that 
$$\exp(\log p_i) = p_i. $$ 

Let $\deck(p)$ be the group of covering transformations of $p$. Then for every $\phi \in \deck(p)$ the function 
$$u \to \frac{1}{2\pi i} (\log p_i (\phi(u)) - \log p_i(u))$$
on $U$ is identically an integer. This integer is independent of the choice of $\log p_i$, and we denote it by $L(p_i , \phi)$. It is not difficult to see that the symbol $L(p_i, \cdot)$  satisfies

\begin{equation}\label{eq:homo} 
L(p_i , \phi \circ \psi) = L(p_i , \phi) + L(p_i , \psi),
\end{equation}

\noindent for every $\phi,\psi \in \deck(p)$. Furthermore, we observe the following.

\begin{lemma}\label{powerlog}
Let $\hat{x}_i:U \to \bC$ be the $k$th root of $p_i$ defined by
$$  \hat{x}_i =\exp\left(\frac{1}{k} \log p_i \right) .$$  
Then for every $\phi \in \deck(p)$ we have
$$\hat{x}_i \circ \phi = \zeta_k^{L(p_i , \phi)} \hat{x}_i .$$
\end{lemma}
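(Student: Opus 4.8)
The plan is to unwind the definitions of the $k$th root and the logarithm symbol, reducing everything to the defining property of $L(p_i,\phi)$. First I would fix $\phi \in \deck(p)$ and write, by definition of $\hat{x}_i$,
$$\hat{x}_i \circ \phi = \exp\left(\frac{1}{k}\,(\log p_i)\circ \phi\right),$$
so the whole question becomes how the chosen determination $\log p_i$ transforms under $\phi$. By the very definition of $L(p_i,\phi)$, the function $u \mapsto \log p_i(\phi(u)) - \log p_i(u)$ is the constant $2\pi i\, L(p_i,\phi)$ on $U$; hence $(\log p_i)\circ \phi = \log p_i + 2\pi i\, L(p_i,\phi)$ as functions on $U$.

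Next I would substitute this identity into the exponential and use the additivity of $\exp$ together with $\exp(2\pi i\, L(p_i,\phi)/k) = \zeta_k^{L(p_i,\phi)}$, which holds because $L(p_i,\phi) \in \bZ$ and $\zeta_k = e^{2\pi i/k}$. This gives
$$\hat{x}_i \circ \phi = \exp\left(\frac{1}{k}\log p_i\right)\exp\left(\frac{2\pi i\, L(p_i,\phi)}{k}\right) = \zeta_k^{L(p_i,\phi)}\,\hat{x}_i,$$
which is exactly the claimed formula. One should note that the preliminary remark in the text — that $L(p_i,\phi)$ does not depend on the choice of determination of $\log p_i$ — guarantees the statement is well posed even though $\hat{x}_i$ itself depends on that choice; in fact both sides of the identity scale the same way if a different branch (differing by an integer multiple of $2\pi i$) is selected, so the relation is robust.

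There is essentially no obstacle here: the lemma is a direct computation, and the only thing to be careful about is that the difference $\log p_i(\phi(u)) - \log p_i(u)$ is genuinely a \emph{constant} function on $U$ (equal to $2\pi i L(p_i,\phi)$), not merely integer-valued pointwise — but this is precisely what was established in the paragraph preceding the lemma, since $U$ is connected and a continuous integer-valued function on a connected space is constant. So the proof is just: apply the definition of $\hat{x}_i$, invoke $(\log p_i)\circ\phi = \log p_i + 2\pi i L(p_i,\phi)$, and split the exponential.
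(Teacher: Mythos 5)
Your proof is correct and is exactly the direct computation the paper has in mind: the lemma is stated there as an observation with no written proof, and your argument (using that $\log p_i\circ\phi-\log p_i$ is the constant $2\pi i\,L(p_i,\phi)$ on the connected set $U$, then splitting the exponential) supplies the omitted details faithfully. Nothing is missing.
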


Recall that $\deck(p)$ is isomorphic to the fundamental group $\pi_1(\bC - R)$, which is a free group generated by $|R|$ elements, each one homotopic to a circle with center $r_i$ and index one. Then we consider the generators $\phi_1,...,\phi_{|R|} \in \deck(p)$ associated with each generator of $\pi_1(\bC - R)$, and  for any $u\in U$ we have the equality
$$L(p_i , \phi_j) = \frac{1}{2\pi i} \int_{u}^{\phi_j(u)} d\log p_i  = \delta_{ij},$$  

\noindent where $\delta_{ij}$ is the usual Kronecker delta. \\

For general aspects of the logarithm symbol on Riemann surfaces, see \cite{LA}.

\label{sec:2}
\section{Generating set for the period lattice of $C_{k,n}$}

Consider the generalized Fermat curve of type $(k,n)$ given by Equation \eqref{eq:typekn} and the set of $n+1$ branch points $R \cup \{\infty \}$, where $$R = \{ r_1 = 0, r_2 = \lambda_0 = 1, r_3=\lambda_1, \dotso ,r_n=\lambda_{n-2}\}.$$

\subsection{A finite generating set for $H_1(C_{k,n}, \bZ)$}\label{homology}
 
Associated with $R$, we have the universal covering $ p: U \to \bC -R $. We have the set of functions 
$$
\left\{ \begin{array}{cccc} p_1 &= & -p& \\ p_i & = & p -r_i& \text{ for } 2\leq i\leq n. \end{array} \right.$$
There exists a $k$th root $\hat{x}_i$ of $p_i$, which by Lemma \ref{powerlog} satisfies
$$\hat{x}_i \circ \phi = \zeta_k^{L(p_i , \phi)} \hat{x}_i  $$
for each $\phi \in \deck(p)$. From Equation \eqref{eq:homo} we have the surjective homomorphism
$$\Psi: \deck(p) \to \bZ_k^n , \quad \Psi(\phi )= ( L(p_1,\phi)  , \dotso, L(p_{n},\phi) ) \mod k , $$ 
so it is not difficult to deduce the following fact.
\begin{lemma}
The subgroup of $\deck(p)$ which leaves each $\hat{x}_i$ invariant  is
$$\deck(p)_k:= \mbox{Ker} (\Psi). $$
\end{lemma}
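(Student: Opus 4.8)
The plan is to read the invariance condition straight off Lemma \ref{powerlog}. Fix $\phi \in \deck(p)$ and an index $i \in \{1,\dots,n\}$. By Lemma \ref{powerlog} we have $\hat{x}_i \circ \phi = \zeta_k^{L(p_i,\phi)}\hat{x}_i$ on $U$, and since $\hat{x}_i = \exp(\tfrac1k \log p_i)$ is nowhere zero on $U$, the transformation $\phi$ fixes $\hat{x}_i$ if and only if $\zeta_k^{L(p_i,\phi)} = 1$. As $\zeta_k = e^{2\pi i/k}$ is a primitive $k$-th root of unity, this is equivalent to $L(p_i,\phi) \equiv 0 \pmod k$.

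Next I would intersect over $i$: $\phi$ leaves every $\hat{x}_i$ (for $1 \le i \le n$) invariant if and only if $L(p_i,\phi) \equiv 0 \pmod k$ for each $i$, which says precisely that the tuple $\Psi(\phi) = (L(p_1,\phi),\dots,L(p_n,\phi)) \bmod k$ is the zero element of $\bZ_k^n$, i.e. $\phi \in \mbox{Ker}(\Psi)$. Because $\Psi$ is a group homomorphism (its additivity being exactly Equation \eqref{eq:homo}), $\mbox{Ker}(\Psi)$ is automatically a subgroup of $\deck(p)$; hence the set of covering transformations fixing all the $\hat{x}_i$ simultaneously is a subgroup, and it equals $\deck(p)_k$ as claimed.

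The only spot requiring a word of care is the index $i=1$, where one has set $p_1 = -p$ rather than $p - r_1 = p$. I would note that $-p = e^{i\pi}p$, so any determinations of $\log(-p)$ and $\log p$ differ by the additive constant $i\pi$; therefore $\log p_1(\phi(u)) - \log p_1(u) = \log p(\phi(u)) - \log p(u)$, giving $L(p_1,\phi) = L(p,\phi)$, and the sign is harmless. Beyond this bookkeeping there is no real obstacle: the statement is an immediate consequence of Lemma \ref{powerlog} and the definition of $\Psi$, which is why the text can afford to leave it as "not difficult to deduce".
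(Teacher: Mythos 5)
Your proof is correct and is precisely the argument the paper leaves implicit when it says the fact is ``not difficult to deduce'': invariance of $\hat{x}_i$ under $\phi$ is read off from Lemma \ref{powerlog} as $L(p_i,\phi)\equiv 0 \pmod k$, and intersecting over $i$ gives $\mbox{Ker}(\Psi)$. Your remark that the sign in $p_1=-p$ only shifts $\log p_1$ by an additive constant, hence does not affect $L(p_1,\phi)$, is a worthwhile detail the paper glosses over.
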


Now we consider the punctured Riemann surface $C_{k.n}' = C_{k,n} - \pi^{-1}(R\cup\{\infty\})$. We now prove the following result.

\begin{lemma}\label{deckk}
The map
$$q:U \to C_{k,n}', \quad u \mapsto q(u) = [1,\hat{x}_1(u),\cdots, \hat{x}_{n}(u)] $$
is a universal covering of $C_{k,n}'$, with $\deck(q) = \deck(p)_k$.
\end{lemma}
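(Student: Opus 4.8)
The plan is to verify directly that $q$ is a well-defined holomorphic map, that it is a covering map onto $C_{k,n}'$, and finally — using that $U$ is simply connected — that it must be \emph{the} universal covering, after which the identification of its deck group drops out. First I would check that $q$ actually lands in $C_{k,n}'$: by construction $\hat{x}_i^k = p_i$, so that $\hat{x}_1^k + \hat{x}_2^k + \hat{x}_3^k = -p + (p-r_2) + (p - r_3) + \dotsb$ — wait, more carefully, the defining equations of $C_{k,n}$ in the normalization $x_0 = 1$ read $x_i^k = \lambda_{i-2}(-x_0^k) + \dotsb$; I would match these against the identities $\hat{x}_i^k = p - r_i$ and $\hat{x}_1^k = -p$, so that each defining polynomial becomes a linear identity among the $p_i$'s that holds identically on $U$. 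Since $p$ omits $R$ and $\infty$, the point $[1,\hat{x}_1(u),\dotsc,\hat{x}_n(u)]$ avoids the fibers $\pi^{-1}(R \cup \{\infty\})$, so $q$ maps into $C_{k,n}'$; holomorphy is clear since each $\hat{x}_i$ is holomorphic on $U$.

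Next I would show $q$ is a covering map. The key observation is the factorization: $\pi \circ q = p$ composed with the coordinate map, more precisely $\pi(q(u)) = -(\hat{x}_1(u)/x_0)^k = -\hat{x}_1(u)^k = -p_1(u) = p(u)$ (up to the sign convention built into $p_1 = -p$, which I would track carefully). Thus the diagram relating $q$, $p$, and the restriction $\pi : C_{k,n}' \to \bC - R$ commutes. Since $p : U \to \bC - R$ is a covering and $\pi : C_{k,n}' \to \bC - R$ is a covering of degree $k^n$ (being the restriction of the degree-$k^n$ map $\pi$ over the complement of its branch locus), a standard argument shows $q$ is a covering: locally over an evenly-covered disc $D \subset \bC - R$, the preimage $p^{-1}(D)$ is a disjoint union of copies of $D$, on each of which the functions $\hat{x}_i$ are given by explicit holomorphic branches of $p_i^{1/k}$, and $q$ carries each such sheet biholomorphically onto one of the $k^n$ sheets of $\pi^{-1}(D)$, the sheet being determined by the tuple of $k$th-root branches chosen. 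I should check surjectivity, which follows because every sheet of $\pi^{-1}(D)$ corresponds to some choice of root branches and hence is hit.

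Once $q$ is known to be a covering with $U$ simply connected, $q$ is automatically the universal covering of $C_{k,n}'$, and $\deck(q)$ is identified via the lifting criterion: a deck transformation of $q$ is precisely a deck transformation $\phi$ of $p$ (since $p = \pi \circ q$ forces any self-homeomorphism of $U$ over $q$ to be over $p$ as well) that additionally satisfies $\hat{x}_i \circ \phi = \hat{x}_i$ for every $i$ — equivalently, by Lemma~\ref{powerlog}, $\zeta_k^{L(p_i,\phi)} = 1$, i.e. $L(p_i,\phi) \equiv 0 \bmod k$ for all $i$, which is exactly the condition $\phi \in \mathrm{Ker}(\Psi) = \deck(p)_k$. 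Conversely any $\phi \in \deck(p)_k$ leaves each $\hat{x}_i$ invariant and hence descends to a deck transformation of $q$. The main obstacle I anticipate is the bookkeeping in the covering-map step: one must be genuinely careful that the $k^n$ local sheets of $\pi^{-1}(D)$ over a small disc are in bijection with the $k^n$ tuples of local $k$th-root branches $(\varepsilon_1 \hat{x}_1,\dotsc,\varepsilon_n \hat{x}_n)$, $\varepsilon_i^k = 1$, and that no two distinct tuples give the same point of $C_{k,n}$ — this is where the precise form of the defining equations \eqref{eq:typekn}, rather than just their existence, is used.
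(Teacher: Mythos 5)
Your proposal is correct and follows essentially the same route as the paper: verify $q(U)\subset C_{k,n}'$ from the identities $\hat{x}_1^k=-p$, $\hat{x}_i^k=p-r_i$, establish the covering property locally over evenly covered discs, and identify $\deck(q)$ with $\mathrm{Ker}(\Psi)$ via the relation $\hat{x}_i\circ\phi=\zeta_k^{L(p_i,\phi)}\hat{x}_i$ of Lemma~\ref{powerlog}. The only cosmetic difference is that the paper proves surjectivity of $q$ by a single global adjustment (choosing $\phi$ with $L(p_i,\phi)=-j_i$), whereas you deduce it from the local sheet count, which implicitly uses the same surjectivity of $\Psi$ onto $\bZ_k^n$.
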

\begin{proof}
Since $r_i + \hat{x}_1^k + \hat{x}_i^k = \lambda_{i-2} - p + (p-r_i) = 0$ for $i\geq 2$,  we have $q(U) \subset C'_{k,n}.$
Let $a\in C'_{k,n}$ and assume $a = [1,a_1,...,a_{n}]$, with $a_i \neq 0$ for every $i$. If $q(u) =a$, then $\hat{x}_i(u) = a_i$ for each $i$. In particular $\hat{x}_1(u)^k = -p(u) = a_1^k$. Since $p$ is surjective, there exists $u \in U$ such that $p(u) = -a_1^k$, and hence $\hat{x}_1(u) = \zeta_k^{j_1} a_1$ for some integer $j_1$. Since $a_i^k = -a_1^k - r_i$ for $1<i \leq  n$ and  
$$\hat{x}_i^k(u) = p(u) - \lambda_{i-2} =-a^k_1 - r_i = a_i^k,\quad 1<i \leq  n,$$
we have  $\hat{x}_i(u) = \zeta_k^{j_i} a_i$ with an integer $j_i$ for each $i$. Now we choose $\phi \in \deck(p)$ such that
$$L(p_i , \phi) = -j_i $$
for every $i$, we get $q(\phi(u)) = a$, and therefore $q$ is surjective. Now $q$ is a covering map because every $a\in C_{k,n}'$ has an evenly covered neighborhood since $p$ is a covering map.  Finally, if $u,v \in q^{-1}(a)$ with $a \in C'_{k,n}$, then $v = \phi(u)$ for some $\phi \in \deck(p)$. Now, as
$$q(u) =  [1,\hat{x}_{1}(u) , ... ,\hat{x}_{n}(u) ]  = [1, \zeta_k^{L(p_1, \phi)}\hat{x}_{1}(u) , ... ,\zeta_k^{L(p_{n}, \phi)}\hat{x}_{n}(u) ] = q(v),$$
 we have $\phi \in \deck(p)_k$. 
\end{proof}

From the previous two lemmas, we have
\begin{lemma}
The map $\Psi$ gives an isomorphism
$$\deck(p)/\deck(q) \cong\bZ_k^n .$$
\end{lemma}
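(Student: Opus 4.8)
The plan is to deduce this immediately from the three facts already assembled in the preceding discussion, via the first isomorphism theorem. Recall that $\Psi\colon \deck(p)\to\bZ_k^n$ was shown to be a surjective group homomorphism, that the lemma preceding Lemma~\ref{deckk} identifies $\deck(p)_k = \mathrm{Ker}(\Psi)$, and that Lemma~\ref{deckk} exhibits $\deck(q)$ as exactly the subgroup $\deck(p)_k$ of $\deck(p)$.

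First I would record that $\deck(q)$ is a well-defined subgroup of $\deck(p)$: since $U$ is simultaneously the universal covering of $\bC-R$ via $p$ and of $C_{k,n}'$ via $q$, every covering transformation of $q$ is in particular a covering transformation of $p$ (it permutes the fibers of $q$, hence the fibers of $p=\hat\pi\circ\pi\circ q$ up to the covering structure), which is precisely the content of the chain of equalities $\deck(q)=\deck(p)_k\subseteq\deck(p)$ coming from Lemma~\ref{deckk}. In particular $\deck(q)$ is normal in $\deck(p)$ because it is the kernel of a homomorphism, so the quotient group $\deck(p)/\deck(q)$ makes sense.

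Next I would apply the first isomorphism theorem to $\Psi$: since $\Psi$ is surjective with kernel $\deck(p)_k=\deck(q)$, it descends to an isomorphism
$$\overline{\Psi}\colon \deck(p)/\deck(q)\;\xrightarrow{\ \sim\ }\;\bZ_k^n,\qquad \overline{\Psi}\bigl(\phi\,\deck(q)\bigr)=\bigl(L(p_1,\phi),\dots,L(p_n,\phi)\bigr)\bmod k.$$
This is exactly the claimed isomorphism, and the proof is complete.

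There is essentially no hard step here; the only thing to be careful about is the bookkeeping in the previous paragraph, namely that the $\deck(q)$ appearing in Lemma~\ref{deckk} really is being viewed as the same subgroup of $\deck(p)$ as $\mathrm{Ker}(\Psi)$, so that the quotient on the left-hand side coincides with $\deck(p)/\mathrm{Ker}(\Psi)$. Once that identification is made explicit, the statement is a formal consequence of surjectivity of $\Psi$ and the computation $\deck(q)=\mathrm{Ker}(\Psi)$ already established.
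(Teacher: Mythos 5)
Your proof is correct and follows exactly the route the paper intends: the paper derives this lemma directly "from the previous two lemmas" (the identification $\deck(p)_k=\mathrm{Ker}(\Psi)$ and the equality $\deck(q)=\deck(p)_k$ from Lemma~\ref{deckk}), together with the surjectivity of $\Psi$ and the first isomorphism theorem. Your added bookkeeping about $\deck(q)$ being a normal subgroup of $\deck(p)$ is a harmless elaboration of the same argument.
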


We denote by $\phi_1,\dotso,\phi_{n}$ the $n$ generators of $\deck(p)$ with
$$L(p_i, \phi_j) = \delta_{ij}, \quad 1\leq i,j \leq n.$$ 

\begin{lemma}  $\deck(q)$ is generated by
$$\phi_i^k \text{ for each } 1\leq i \leq n \quad \text{and} \quad [\deck(p) , \deck(p)], $$
where $[\deck(p) , \deck(p)]$ is the commutator subgroup of $\deck(p)$.
\end{lemma}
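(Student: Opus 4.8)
The plan is to identify $\deck(q) = \deck(p)_k = \ker(\Psi)$ explicitly as a subgroup of the free group $F := \deck(p)$ on the generators $\phi_1,\dots,\phi_n$. Write $N$ for the subgroup of $F$ generated by $\phi_1^k,\dots,\phi_n^k$ together with the commutator subgroup $[F,F]$; we must show $N = \ker(\Psi)$. The inclusion $N \subseteq \ker(\Psi)$ is immediate from the two defining properties of $\Psi$: it is a homomorphism (so $[F,F] \subseteq \ker\Psi$ since the target $\bZ_k^n$ is abelian), and $\Psi(\phi_i^k) = k\Psi(\phi_i) = 0$ in $\bZ_k^n$. So the whole content is the reverse inclusion $\ker(\Psi) \subseteq N$.

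**The key reduction.** First I would observe that $N \supseteq [F,F]$ means the quotient $F/N$ is abelian, so $F/N$ is a quotient of $F^{\mathrm{ab}} = F/[F,F] \cong \bZ^n$, generated by the images of $\phi_1,\dots,\phi_n$. Quotienting further by the images of $\phi_i^k$ shows $F/N$ is a quotient of $\bZ^n/k\bZ^n = \bZ_k^n$; explicitly, $F/N \cong \bZ_k^n$ with $\phi_i \mapsto e_i$ (the $i$th standard generator). On the other hand, $\Psi$ induces a well-defined surjective homomorphism $\overline{\Psi}: F/N \to \bZ_k^n$ (well-defined precisely because $N \subseteq \ker\Psi$, which we already have). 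Under the identification $F/N \cong \bZ_k^n$ just made, $\overline{\Psi}$ sends $e_i = [\phi_i]$ to $\Psi(\phi_i) = (L(p_1,\phi_i),\dots,L(p_n,\phi_i)) = (\delta_{1i},\dots,\delta_{ni}) = e_i$, so $\overline{\Psi}$ is the identity map on $\bZ_k^n$, hence an isomorphism. Therefore $\ker(\overline{\Psi}) = 0$ in $F/N$, which says exactly that $\ker(\Psi) = N$.

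**Anticipated obstacle.** There is no deep obstacle here — it is essentially a presentation-of-a-quotient argument — but the one point requiring care is making the identification $F/N \cong \bZ_k^n$ rigorous rather than hand-waving it: one should note that $[F,F]$ is contained in $N$ so $F/N$ is abelian, then apply the universal property of $\bZ_k^n = \bZ^n/k\bZ^n$ as the abelian group on $n$ generators each of order dividing $k$, being careful that adding the relations $\phi_i^k = 1$ does not accidentally collapse more (it does not, because $\overline{\Psi}$ provides a section showing $|F/N| \geq k^n$, while the generator-and-relation count gives $|F/N| \leq k^n$). Combining the upper bound from the presentation with the lower bound from surjectivity of $\overline{\Psi}$ pins down $|F/N| = k^n$ and forces $\overline{\Psi}$ to be an isomorphism, completing the proof.
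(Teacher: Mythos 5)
Your proposal is correct and follows essentially the same route as the paper: show the subgroup $N$ generated by the $\phi_i^k$ and $[\deck(p),\deck(p)]$ is contained in $\ker(\Psi)=\deck(q)$, identify $\deck(p)/N\cong\bZ_k^n$ from the presentation, and conclude equality by comparing with $\deck(p)/\deck(q)\cong\bZ_k^n$. You merely make explicit the counting step (upper bound from the relations, lower bound from surjectivity of the induced map) that the paper leaves implicit in the assertion ``so we must have $\deck(q)=K$.''
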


\begin{proof}
Since $\deck(p)/\deck(q)$ is Abelian, we have that $[\deck(p) , \deck(p)] \trianglelefteq \deck(q) $.
We also know that the free generators $\phi_1,...,\phi_n$ of $\deck(p)$ correspond to the canonical basis of $\bZ_k^n$ by $\Psi$, hence $\Psi(\phi_i^k) = 0$ for each $i$. If $K\leq \deck(p)$ is the subgroup generated by each $\phi_i^k$ and $[\deck(p),\deck(p)]$, then we have 
$$ \deck(p)/K \cong \bZ_k^n.$$
So we must have $\deck(q) = K$. 
\end{proof}

Recall that $\pi_1(C_{k,n}') = \deck(q)$ by Lemma \ref{deckk}, so we have

\begin{theorem} \label{teoremaHpinch} The first homology group of $C_{k,n}'$, namely
$$H_1(C_{k,n}' , \bZ) \cong \frac{\deck(q)}{[\deck(q) , \deck(q)]}, $$
is generated by the classes of the elements
$$\phi_i^{k},  \quad 1\leq i \leq n$$ 
and
$$\left(\prod_{d=1}^{n} \phi_d^{g_d} \right) [\phi_j , \phi_{l}] \left(\prod_{d=1}^{n} \phi_d^{g_d} \right)^{-1}, $$
with  integers $ 1 \leq j < l \leq n$ and $0\leq g_d \leq k-1$.
\end{theorem}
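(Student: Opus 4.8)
The plan is to combine the previous lemma (which identifies a generating set for the group $\deck(q)$ as a subgroup of the free group $\deck(p)$) with the standard fact that the abelianization of a group is generated by the images of any generating set. Since $H_1(C_{k,n}',\bZ) \cong \deck(q)/[\deck(q),\deck(q)]$ by Lemma \ref{deckk} and the Hurewicz theorem, it suffices to produce a generating set for $\deck(q)$ as a \emph{group} and then pass to the quotient. The subtlety is that the previous lemma gives $\deck(q)$ as generated by the $\phi_i^k$ \emph{together with the entire commutator subgroup} $[\deck(p),\deck(p)]$, and this latter subgroup is infinitely generated as an abstract group — but as a \emph{normal} subgroup of $\deck(p)$ it is generated by the basic commutators $[\phi_j,\phi_l]$, $1\le j<l\le n$. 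So the first step is to express $\deck(q)$ as generated, as a subgroup, by the $\phi_i^k$ and by all $\deck(p)$-conjugates $w[\phi_j,\phi_l]w^{-1}$ with $w \in \deck(p)$; this follows because normal generation of $[\deck(p),\deck(p)]$ inside $\deck(p)$ means every element of the commutator subgroup is a product of such conjugates, and $\deck(q)$ already contains all of $[\deck(p),\deck(p)]$.

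Next I would reduce the set of conjugators. Working modulo $[\deck(q),\deck(q)]$, a conjugate $w[\phi_j,\phi_l]w^{-1}$ depends only on $w$ modulo the subgroup that commutes with $[\phi_j,\phi_l]$ in the abelianization, and more usefully only on the coset of $w$ in $\deck(p)/\deck(q) \cong \bZ_k^n$: if $w' = w\delta$ with $\delta \in \deck(q)$, then $w'[\phi_j,\phi_l]w'^{-1} = w(\delta[\phi_j,\phi_l]\delta^{-1})w^{-1}$, and since $\delta[\phi_j,\phi_l]\delta^{-1}$ and $[\phi_j,\phi_l]$ are both in $\deck(q)$ and differ by a commutator of elements of $\deck(q)$, they have the same class in $H_1(C_{k,n}',\bZ)$; conjugating by $w$ (an element of $\deck(p)$, hence an automorphism of $\deck(q)$ descending to $H_1$) preserves this equality. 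Therefore the class of $w[\phi_j,\phi_l]w^{-1}$ in $H_1(C_{k,n}',\bZ)$ depends only on $\Psi(w) \in \bZ_k^n$, and we may choose as coset representatives the elements $\prod_{d=1}^{n}\phi_d^{g_d}$ with $0 \le g_d \le k-1$. This yields exactly the asserted generating set: the $\phi_i^k$ together with the elements $\bigl(\prod_{d=1}^{n}\phi_d^{g_d}\bigr)[\phi_j,\phi_l]\bigl(\prod_{d=1}^{n}\phi_d^{g_d}\bigr)^{-1}$.

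The main obstacle I anticipate is the bookkeeping in the reduction step: one must be careful that conjugation by $w \in \deck(p)$, which is an \emph{outer} operation on $\deck(q)$, genuinely induces a well-defined map on $H_1(C_{k,n}',\bZ) = \deck(q)^{\mathrm{ab}}$, and that replacing $\delta[\phi_j,\phi_l]\delta^{-1}$ by $[\phi_j,\phi_l]$ is legitimate — i.e. that $\delta[\phi_j,\phi_l]\delta^{-1}[\phi_j,\phi_l]^{-1} = [\delta,[\phi_j,\phi_l]] \in [\deck(q),\deck(q)]$, which requires $[\phi_j,\phi_l] \in \deck(q)$ (true, since $[\phi_j,\phi_l] \in [\deck(p),\deck(p)] \le \deck(q)$). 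Once this is set up, the argument is essentially formal group theory; no integration or analytic input is needed beyond the already-cited identification $\pi_1(C_{k,n}') = \deck(q)$. I would also remark explicitly that this generating set is highly redundant (it has roughly $n + \binom{n}{2}k^n$ elements, far more than the rank of $H_1(C_{k,n}',\bZ)$), but redundancy is harmless for the purpose of generating the period lattice in the sense of the first section.
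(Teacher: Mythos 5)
Your argument is correct and follows essentially the same route as the paper's own proof: reduce to the normal generation of $[\deck(p),\deck(p)]$ by the basic commutators $[\phi_j,\phi_l]$, split an arbitrary conjugator along the coset decomposition of $\deck(p)/\deck(q)\cong\bZ_k^n$ with representatives $\prod_{d}\phi_d^{g_d}$, and observe that the $\deck(q)$-part of the conjugator acts trivially on the abelianization $\deck(q)/[\deck(q),\deck(q)]$. The only (immaterial) difference is that the paper writes $\gamma=\sigma\rho$ with $\sigma\in\deck(q)$ so that the residual conjugation is inner in $\deck(q)$, whereas you put the $\deck(q)$-factor on the other side and invoke the induced automorphism of $H_1$; both are valid.
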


\begin{proof} Since $\deck(q)$ is generated by $\phi_i^k$ and $[\deck(p) , \deck(p)]$, it is generated by
$$\phi_i^k, \quad 1\leq i \leq n,$$
and
$$\gamma [\phi_j, \phi_l]\gamma^{-1},$$
\noindent with $\gamma \in \deck(p)$ and  $ 1 \leq j < l \leq n$. We have $\deck(p)/\deck(q) = \bZ_k^n$, so $ \{ \prod_{d=1}^{n} \phi_d^{g_d}\}_{0 \leq g_d \leq k-1}$ is a set of representatives such that every $\gamma \in \deck(p)$ lies in $\deck(q)\rho$ for precisely one $\rho$ from this set.

\noindent Choosing the representative $\rho \in  \{ \prod_{d=1}^{n} \phi_d^{g_d}\}_{0 \leq g_d \leq k-1}$, we have $\gamma =\sigma \rho$ with $\sigma \in \deck(q)$, and
$$\gamma [\phi_j,\phi_l]\gamma^{-1} = \sigma (\rho [\phi_j,\phi_l] \rho^{-1}) \sigma^{-1} $$
as a product of elements in $\deck(q)$. Quotienting by $[\deck(q) , \deck(q) ]$ the product commutes, and the $\sigma$'s cancel.
\end{proof}

Since the inclusion map $\iota:C_{k,n}' \hookrightarrow C_{k,n}$ induces a surjective homomorphism between the homology groups, we have

\begin{corollary}
The images of the generating set of $H_1(C_{k,n}', \bZ)$ under the homomorphism induced by the inclusion $ \iota:C_{k,n}' \hookrightarrow C_{k,n}$ forms a generating set for $H_1(C_{k,n} , \bZ )$.
\end{corollary}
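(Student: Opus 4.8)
The plan is to reduce the corollary to the single assertion made in the sentence immediately preceding it, namely that $\iota_*\colon H_1(C_{k,n}',\bZ)\to H_1(C_{k,n},\bZ)$ is surjective, and then invoke the elementary fact that the image of a generating set under a surjective homomorphism is again a generating set. Concretely, if Theorem~\ref{teoremaHpinch} gives a finite generating set $\{c_1,\dots,c_m\}$ of $H_1(C_{k,n}',\bZ)$ and $\iota_*$ is onto, then $\iota_*\big(H_1(C_{k,n}',\bZ)\big)=H_1(C_{k,n},\bZ)$ is generated by $\{\iota_*c_1,\dots,\iota_*c_m\}$. So all the mathematical content lies in the surjectivity of $\iota_*$.

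To establish surjectivity, first I would record that $C_{k,n}'=C_{k,n}\setminus F$ with $F=\pi^{-1}(R\cup\{\infty\})$, and that $F$ is a \emph{finite} set of points of the compact Riemann surface $C_{k,n}$, because $\pi$ is a holomorphic map of finite degree $k^n$ and hence has finite fibres. From here there are two routes. The cleanest topological route is the long exact sequence of the pair $(C_{k,n},C_{k,n}')$:
$$\cdots \to H_1(C_{k,n}',\bZ)\xrightarrow{\ \iota_*\ } H_1(C_{k,n},\bZ)\to H_1(C_{k,n},C_{k,n}';\bZ)\to\cdots .$$
By excision, shrinking to small disjoint coordinate disks $D_p$ around the points $p\in F$, one gets $H_1(C_{k,n},C_{k,n}';\bZ)\cong\bigoplus_{p\in F}H_1\big(D_p,D_p\setminus\{p\};\bZ\big)$. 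Since each $D_p$ is contractible and $D_p\setminus\{p\}\simeq S^1$ is connected, the pair sequence of $(D_p,D_p\setminus\{p\})$ shows $H_1(D_p,D_p\setminus\{p\};\bZ)=0$. Hence $H_1(C_{k,n},C_{k,n}';\bZ)=0$, and exactness forces $\iota_*$ to be onto. (Alternatively, and more hands-on: any class in $H_1(C_{k,n},\bZ)$ is represented by a loop, which by transversality — a $1$-dimensional source into a $2$-dimensional surface, avoiding a $0$-dimensional subset — can be homotoped off the finite set $F$ into $C_{k,n}'$, again giving surjectivity.)

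There is no genuine obstacle here: this is a standard ``filling in the punctures'' statement. The only point that deserves a moment's care is verifying that the removed set $\pi^{-1}(R\cup\{\infty\})$ consists of honest isolated points of the smooth surface $C_{k,n}$, so that excision (or the transversality argument) applies without modification; this is immediate from smoothness of $C_{k,n}$ together with finiteness of the fibres of $\pi$. With that in place the corollary is a one-line consequence of surjectivity of $\iota_*$.
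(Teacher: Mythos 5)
Your proposal is correct and follows the same route as the paper, which simply asserts in the sentence before the corollary that $\iota_*$ is surjective and concludes immediately. The only difference is that you supply the standard justification of that surjectivity (via the long exact sequence of the pair and excision around the finitely many punctures, or by pushing loops off a finite set), which the paper leaves unproved; your verification is accurate.
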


We summarize the maps used in the following diagram.

\[
\xymatrix{
U \ar@/_2pc/[dd]_{p} \ar[d]^{q}\\
C_{k,n}' \ar[d]^{\pi} \ar@{^{(}->}[r]^{\iota} & C_{k,n} \ar[d]^{\pi} \\ \hat{\bC}-R\cup\{\infty\}  \ar@{^{(}->}[r] & \hat{\bC} 
}
\]
\subsection{Computing periods}
Let $\phi \in \deck(p)$ and fix $u \in U$. We denote by $l_{\phi}$ a curve from $u$ to $\phi(u)$ on $U$. So a generating set for $H_1(C_{k,n} , \bZ)$ are the homology classes of the curves $\iota \circ q (l_{\phi})$ for each $\phi$ of the form

$$\phi_i^{k},\quad \left(\prod_{d=1}^{n} \phi_d^{g_d} \right) [\phi_j , \phi_{l}] \left(\prod_{d=1}^{n} \phi_d^{g_d} \right)^{-1}$$
for $1\leq i \leq n$,  $1 \leq j < l \leq n$, and $0\leq g_d \leq k-1$. Thus, to find an explicit generating set for the period lattice of $C_{k,n}$ we need to calculate

$$\int_{\iota \circ q \circ l_{\phi}} \theta_{\alpha_1,  \dotso ,\alpha_{n}}   =\int_{l_{\phi}} q^* \theta_{\alpha_1,  \dotso ,\alpha_{n}} =  \int_{u}^{\phi(u)}q^* \theta_{\alpha_1,  \dotso ,\alpha_{n}} .$$

\begin{lemma}\label{eigen} We have the following relations between the induced pullbacks of the generators of $H^{1,0}(C_{k,n})$:
\begin{equation}\label{eq:pback1}
q^* \theta_{\alpha_1,  \dotso ,\alpha_{n}}  =  \frac{\hat{x}_1^{\alpha_1}d\hat{x}_1}{\hat{x}_2^{\alpha_{2}}\cdots \hat{x}_{n}^{\alpha_{n}}},
\end{equation}

\begin{equation} \label{eq:pback2}
\phi^* q^* \theta_{\alpha_1,  \dotso ,\alpha_{n}}  = \zeta_k^{(\alpha_1+1)L(p_1,\phi) - \sum_{d=2}^{n} \alpha_{d} L( p_d , \phi ) \alpha_d}  q^* \theta_{\alpha_1,  \dotso ,\alpha_{n}},
\end{equation}
for each $\phi \in \deck(p)$. In particular, $ q^* \theta_{\alpha_1,  \dotso ,\alpha_{n}}$ is an eigenvector for each $\phi^* \in \deck(p)$.
\end{lemma}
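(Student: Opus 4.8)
The plan is to compute the pullback $q^*\theta_{\alpha_1,\dots,\alpha_n}$ directly from the definition of $q$, and then read off the transformation law under $\deck(p)$ using Lemma~\ref{powerlog}. Recall that $\theta_{\alpha_1,\dots,\alpha_n} = y_1^{\alpha_1}\,dy_1 / (y_2^{\alpha_2}\cdots y_n^{\alpha_n})$ with $y_j = x_j/x_0$, and that $q(u) = [1,\hat{x}_1(u),\dots,\hat{x}_n(u)]$. So $q^*y_j = \hat{x}_j$ for each $j$ (since the zeroth coordinate is identically $1$), and pullback commutes with $d$ and with the algebraic operations of taking powers, products and quotients. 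This immediately gives Equation~\eqref{eq:pback1}:
\begin{equation*}
q^*\theta_{\alpha_1,\dots,\alpha_n} = \frac{(q^*y_1)^{\alpha_1}\,d(q^*y_1)}{(q^*y_2)^{\alpha_2}\cdots(q^*y_n)^{\alpha_n}} = \frac{\hat{x}_1^{\alpha_1}\,d\hat{x}_1}{\hat{x}_2^{\alpha_2}\cdots\hat{x}_n^{\alpha_n}}.
\end{equation*}

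Next I would establish Equation~\eqref{eq:pback2} by applying $\phi^*$ to the right-hand side of \eqref{eq:pback1}. By Lemma~\ref{powerlog}, $\hat{x}_i\circ\phi = \zeta_k^{L(p_i,\phi)}\hat{x}_i$, and since $\zeta_k^{L(p_i,\phi)}$ is a constant, $\phi^*(d\hat{x}_1) = d(\hat{x}_1\circ\phi) = \zeta_k^{L(p_1,\phi)}\,d\hat{x}_1$. Substituting, the numerator picks up a factor $\zeta_k^{L(p_1,\phi)\cdot\alpha_1}$ from $\hat{x}_1^{\alpha_1}$ and another factor $\zeta_k^{L(p_1,\phi)}$ from $d\hat{x}_1$, while the denominator contributes $\zeta_k^{\sum_{d=2}^{n}\alpha_d L(p_d,\phi)}$, which moves to the numerator with a minus sign in the exponent. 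Collecting exponents yields the scalar $\zeta_k^{(\alpha_1+1)L(p_1,\phi) - \sum_{d=2}^{n}\alpha_d L(p_d,\phi)}$ times $q^*\theta_{\alpha_1,\dots,\alpha_n}$, which is exactly \eqref{eq:pback2}. Since $\phi$ ranges over $\deck(p)$ and each $q^*\theta_{\alpha_1,\dots,\alpha_n}$ is a nonzero meromorphic $1$-form on $U$, this says precisely that $q^*\theta_{\alpha_1,\dots,\alpha_n}$ is a simultaneous eigenvector for the action of $\deck(p)$ by pullback, with eigenvalue the $k$th root of unity above.

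There is no real obstacle here; the only point requiring a little care is the bookkeeping of the exponent in \eqref{eq:pback2} — in particular making sure the $+1$ in $(\alpha_1+1)$ (coming from differentiating $\hat{x}_1$) and the sign on the denominator terms are correct — and observing that the displayed exponent in the statement, which reads ``$-\sum_{d=2}^n \alpha_d L(p_d,\phi)\alpha_d$'', should be read as $-\sum_{d=2}^n \alpha_d L(p_d,\phi)$ (the trailing $\alpha_d$ appears to be a typographical duplication). Everything else is a routine manipulation of pullbacks, using only that $q^*$ is a ring homomorphism on functions commuting with $d$, together with the scalar-multiplier formula of Lemma~\ref{powerlog}.
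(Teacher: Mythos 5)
Your proposal is correct and follows exactly the paper's (very terse) argument: Equation \eqref{eq:pback1} comes from $\hat{x}_i = y_i\circ q$, and Equation \eqref{eq:pback2} from the multiplier formula of Lemma \ref{powerlog}; your exponent bookkeeping is right, and you correctly flag the trailing $\alpha_d$ in the displayed exponent as a typographical duplication.
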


\proof{ The first result follows from the observation that $\hat{x}_i = y_i \circ q$ for each $i$. The second follows from Lemma \ref{powerlog} in Section \ref{logarith}. \qed
}

\vspace{10pt}
We denote by $M_i$ the value $(\alpha_1+1)L(p_1,\phi_i) -  \sum_{d=2}^{n} \alpha_d L( p_d ,\phi_i)$. We observe that

$$M_i = \left\{ \begin{array}{ccc} \alpha_1+1 & &i=1 \\ -\alpha_{i+1} & &2 \leq i \leq n. \end{array} \right.  $$

\noindent Moreover, since $U$ is a simply connected domain, we have for $\phi, \psi \in \deck(p)$ and $\omega \in H^{1,0}(U)$ the relation
$$\int_{u}^{\phi \circ \psi (u)} \omega = \int_{u}^{\phi (u)}\omega + \int_{u}^{\psi (u)} \phi^{*} \omega .$$

\begin{lemma}
For each $\phi_i^k$ with $i \in \{1,...,n\}$ we have
$$\int_{\iota \circ q \circ l_{\phi_i^k}} \theta_{\alpha_1 , ... ,\alpha_{n}} = 0.$$
\end{lemma}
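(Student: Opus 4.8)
The plan is to strip off one factor of $\phi_i$ at a time. Write $\phi_i^k=\phi_i\circ\cdots\circ\phi_i$ ($k$ factors) and apply the additivity relation $\int_u^{\phi\circ\psi(u)}\omega=\int_u^{\phi(u)}\omega+\int_u^{\psi(u)}\phi^*\omega$ to $\omega=q^*\theta_{\alpha_1,\ldots,\alpha_n}$, using the eigenvalue identity $\phi_i^*q^*\theta_{\alpha_1,\ldots,\alpha_n}=\zeta_k^{M_i}\,q^*\theta_{\alpha_1,\ldots,\alpha_n}$ of Lemma~\ref{eigen}. An induction on the number of factors then gives
$$\int_{\iota\circ q\circ l_{\phi_i^k}}\theta_{\alpha_1,\ldots,\alpha_n}=\int_u^{\phi_i^k(u)}q^*\theta_{\alpha_1,\ldots,\alpha_n}=\left(\sum_{j=0}^{k-1}\zeta_k^{jM_i}\right)\int_u^{\phi_i(u)}q^*\theta_{\alpha_1,\ldots,\alpha_n}.$$
If $k\nmid M_i$ the geometric sum $\sum_{j=0}^{k-1}\zeta_k^{jM_i}$ vanishes and we are done; from the recorded values of $M_i$ this already disposes of every case with $n=2$, where the inequalities defining $I_{k,n}$ force $M_1$ and $M_2$ to be non-multiples of $k$.

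There remain the indices $i$ with $k\mid M_i$, which do occur as soon as $n\ge 3$ (for instance $(k,n)=(3,3)$, $i=1$ and $(\alpha_1,\alpha_2,\alpha_3)=(2,2,2)$, where $M_1=3$); for these the prefactor equals $k$, so instead I would show that $\iota\circ q\circ l_{\phi_i^k}$ is already null-homotopic in $C_{k,n}$, which forces \emph{every} holomorphic $1$-form to integrate to zero over it. Under the isomorphism $\deck(p)\cong\pi_1(\bC-R)$ the generator $\phi_i$ is a loop winding once around $r_i$ and around no other branch value, so $p\circ l_{\phi_i^k}$ is homotopic to a path from $p(u)$ into a small punctured disc about $r_i$, then $k$ turns around $r_i$, then the reverse path. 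Since $r_i$ is a branch value of $\pi$ of order $k$ --- every point of $\pi^{-1}(r_i)$ has ramification index $k$, as forced by the $\bZ_k^n$-action --- the map $\pi$ has local form $z\mapsto z^k$ at such a point, so $k$ turns around $r_i$ lift to a single turn around one point $\tilde r_i\in\pi^{-1}(r_i)$; and $q\circ l_{\phi_i^k}$ is a genuine loop in $C_{k,n}'$ because $\phi_i^k\in\deck(q)=\pi_1(C_{k,n}')$ by Theorem~\ref{teoremaHpinch}. Hence, up to conjugation by the lifted connecting path, $\iota\circ q\circ l_{\phi_i^k}$ is a single small loop around the puncture $\tilde r_i$, which bounds a disc in $C_{k,n}$. (In fact this null-homotopy argument works for every $i$, so it could replace the geometric-sum step altogether; I keep the latter because it is the quicker route in the generic case and reuses Lemma~\ref{eigen} directly.)

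The step I expect to require the most care is this lift of the $k$-fold loop: one must verify that, after conjugating by the lifted connecting path, the loop in $C_{k,n}'$ really encircles a single point of $\pi^{-1}(r_i)$ exactly once --- not several punctures, and not with higher winding number. This is exactly where the local model $z\mapsto z^k$ of $\pi$ above $r_i$ is needed, which in turn rests on the fact that all ramification of $\pi$ over $r_i$ is of order $k$.
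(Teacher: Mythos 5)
Your proof is correct, and its main mechanism (peel off factors of $\phi_i$ with the additivity relation, use the eigenvalue identity of Lemma~\ref{eigen}, and evaluate the resulting geometric sum) is exactly the paper's. Where you diverge is in the case division, and your version is actually the more careful one. The paper splits into ``$M_i\neq 0$'' (geometric sum vanishes) and ``$M_i=0$'' (the form is holomorphic inside the loop, so the integral vanishes by Cauchy). But the geometric sum $\sum_{m=1}^{k}\zeta_k^{(m-1)M_i}$ vanishes precisely when $k\nmid M_i$, not when $M_i\neq 0$; your example $(k,n)=(3,3)$, $(\alpha_1,\alpha_2,\alpha_3)=(2,2,2)$, $M_1=3$ is a legitimate element of $I_{k,n}$ for which $M_1$ is a nonzero multiple of $k$, the sum equals $k$, and neither branch of the paper's dichotomy applies as written. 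Your second branch --- $k\mid M_i$, handled by showing $\iota\circ q\circ l_{\phi_i^k}$ is null-homotopic in $C_{k,n}$ via the local model $z\mapsto z^k$ of $\pi$ over $r_i$ --- is essentially the same topological fact the paper invokes for its ``$M_i=0$'' case (the loop bounds a disc on which $\theta_{\alpha_1,\dots,\alpha_n}$ is holomorphic, by Theorem~2(1)), but applied to the correct set of exceptional indices. So your argument is sound, covers all cases, and in fact patches a small gap in the published proof; your parenthetical remark that the null-homotopy argument alone suffices for every $i$ is also correct and would give the shortest complete proof.
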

\proof{If $M_i$ is non zero, then from Equation \eqref{eq:pback2} of Lemma \ref{eigen} we obtain
\begin{align*} 
\int_{\iota \circ q \circ l_{\phi_i^k}} \theta_{\alpha_1 , ... ,\alpha_{n}}  &= \int_{u}^{\phi_i^k u} q^{*} \theta_{\alpha_1, ... ,\alpha_{n}}\\
 &= \sum_{m=1}^{k} \int_{u}^{\phi_i u} (\phi_{i}^{m-1})^{*}q^{*} \theta_{\alpha_1,  \dotso ,\alpha_{n}}\\
&= \int_{u}^{\phi_i u} q^{*} \theta_{\alpha_1,  \dotso ,\alpha_{n}} \sum_{m=1}^{k} \zeta_k^{ (m-1) M_i} \\
&= 0.
\end{align*}
In the case where $M_i = 0$, the differential form  $\theta_{\alpha_1 , ... ,\alpha_{n}}$ is holomorphic on the interior of the loop $\iota \circ q \circ l_{\phi_i^k}$, so that the integral vanishes.
\qed}\\

We conclude that the homology class of each $\phi_i^k$ is null in $H(C_{k,n}, \bZ)$, which reduces the problem to computing the integrals over
 $$\left(\prod_{d=1}^{n} \phi_d^{g_d} \right) [\phi_j , \phi_{l}] \left(\prod_{d=1}^{n} \phi_d^{g_d} \right)^{-1}. $$
 
 \begin{lemma}\label{sumasuma}
For each $\sigma= \rho [\phi_j , \phi_l ] \rho^{-1}$ with $\rho = \prod_{d=1}^{n} \phi_d^{g_d}$ we have
$$\int_{\iota \circ q \circ l_{\sigma}} \theta_{\alpha_1,  \dotso ,\alpha_{n}}   = \zeta_k^{ \sum_{d=1}^{n} g_d M_d} \int_{\iota \circ q \circ l_{[\phi_j , \phi_l]} } \theta_{\alpha_1,  \dotso ,\alpha_{n}}. $$
\end{lemma}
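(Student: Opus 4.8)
The plan is to decompose the path $l_\sigma$ for $\sigma = \rho[\phi_j,\phi_l]\rho^{-1}$ into three consecutive arcs — one tracing $\rho$, one tracing a translate of $[\phi_j,\phi_l]$, and one tracing $\rho^{-1}$ — and then use the cocycle-type additivity formula
$$\int_u^{\phi\circ\psi(u)}\omega = \int_u^{\phi(u)}\omega + \int_u^{\psi(u)}\phi^*\omega,$$
together with the eigenvector property of $q^*\theta_{\alpha_1,\dots,\alpha_n}$ from Lemma \ref{eigen}, Equation \eqref{eq:pback2}. The first arc (along $\rho$) and the last arc (along $\rho^{-1}$) should cancel: the contribution from $\rho^{-1}$ is the negative of the contribution from $\rho$ up to the scalar $\phi^*$-factor picked up in between, and since $q^*\theta$ is a common eigenvector for every $\phi^*$, that factor is just a root of unity, so the two boundary contributions combine to leave only the middle integral multiplied by that scalar.

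First I would apply the additivity formula with $\phi = \rho$ and $\psi = [\phi_j,\phi_l]\rho^{-1}$, getting
$$\int_u^{\sigma(u)} q^*\theta = \int_u^{\rho(u)} q^*\theta + \int_u^{[\phi_j,\phi_l]\rho^{-1}(u)} \rho^* q^*\theta.$$
Then I would expand the second term again by additivity, writing $[\phi_j,\phi_l]\rho^{-1} = [\phi_j,\phi_l]\circ\rho^{-1}$, so it becomes
$$\int_u^{\rho^{-1}(u)} \rho^* q^*\theta + \int_u^{[\phi_j,\phi_l](u)} \rho^*[\phi_j,\phi_l]^* q^*\theta.$$
Now I use that $\rho^* q^*\theta = \zeta_k^{\sum_d g_d M_d}\, q^*\theta$ — this is exactly Equation \eqref{eq:pback2} applied to $\rho = \prod_d \phi_d^{g_d}$, using that $L(p_i,\cdot)$ is a homomorphism (Equation \eqref{eq:homo}) so that the exponent is additive in the $\phi_d$'s, yielding $\sum_d g_d M_d$. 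Similarly $[\phi_j,\phi_l]^* q^*\theta = q^*\theta$ because $[\phi_j,\phi_l]$ lies in the commutator subgroup, hence in $\deck(q)$, on which $q^*\theta$ is invariant (equivalently $L(p_i,[\phi_j,\phi_l]) = 0$ for all $i$). Combining, the two $\rho$-arcs give $\int_u^{\rho(u)} q^*\theta + \zeta_k^{\sum_d g_d M_d}\int_u^{\rho^{-1}(u)} q^*\theta$, and since $\int_u^{\rho^{-1}(u)} q^*\theta = -\zeta_k^{-\sum_d g_d M_d}\int_u^{\rho(u)} q^*\theta$ (reversing the path and pulling back by $\rho^{-1}$, whose eigenvalue is the inverse), these cancel exactly, leaving $\zeta_k^{\sum_d g_d M_d}\int_u^{[\phi_j,\phi_l](u)} q^*\theta$, which is the claimed identity after rewriting as $\int_{\iota\circ q\circ l_{[\phi_j,\phi_l]}}\theta_{\alpha_1,\dots,\alpha_n}$.

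The main obstacle I anticipate is purely bookkeeping: making sure the eigenvalue exponents are tracked with the correct signs when reversing $\rho^{-1}$ and when nesting the additivity formula twice, since a sign error there would spoil the cancellation. In particular one must check carefully that the eigenvalue of $(\rho^{-1})^*$ is the reciprocal of that of $\rho^*$ — which follows since $\phi\mapsto\phi^*$ is an anti-homomorphism and the eigenvalues form a character — and that the path-reversal identity $\int_{l_{\rho^{-1}}} = -\int_{\rho^{-1}(l_\rho)}$ is applied consistently. Once the signs are pinned down, everything reduces to the homomorphism property of $L(p_i,\cdot)$ and the eigenvector statement already established.
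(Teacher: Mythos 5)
Your proposal is correct and follows essentially the same route as the paper: both rest on the additivity formula for $\int_u^{\phi\circ\psi(u)}$, the eigenvector property of $q^*\theta_{\alpha_1,\dots,\alpha_n}$ from Lemma \ref{eigen}, and the invariance of this form under $[\phi_j,\phi_l]\in\deck(q)$; the paper merely organizes the computation by shifting the base point to $\rho^{-1}u$ and pulling back by $\rho$ once, rather than cancelling the $\rho$- and $\rho^{-1}$-arcs explicitly as you do. The only blemish is that in your second application of additivity the pullback factors are attached to the wrong summands (the formula for $\phi\circ\psi$ puts $\phi^*$ on the $\psi$-term, so $[\phi_j,\phi_l]^*$ should land on the $\rho^{-1}$-integral), but since $[\phi_j,\phi_l]^*$ acts trivially on $q^*\theta_{\alpha_1,\dots,\alpha_n}$ this does not affect the outcome.
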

\proof{From Lemma \ref{eigen} and the observation that $[\phi_j,\phi_l] \in \deck(q)$ leaves $q^{*}  \theta_{\alpha_1,  \dotso ,\alpha_{n}}$ invariant, it follows  that
\begin{align*}
\int_{\iota \circ q \circ l_{\sigma}}  \theta_{\alpha_1,  \dotso ,\alpha_{n}} &= \int_{u}^{\rho[\phi_j,\phi_l]   \rho^{-1} u} q^{*}  \theta_{\alpha_1,  \dotso ,\alpha_{n}}\\
&=\int_{\rho \rho^{-1} u}^{\rho[\phi_j,\phi_l]  \rho^{-1} u} q^{*}  \theta_{\alpha_1,  \dotso ,\alpha_{n}}\\
&=\int_{\rho^{-1} u}^{ [\phi_j,\phi_l]  \rho^{-1} u} \rho^{*}q^{*}  \theta_{\alpha_1,  \dotso ,\alpha_{n}}\\
&= \zeta_k^{ \sum_{d=1}^{n} g_d M_d}  \int_{\rho^{-1} u}^{[\phi_j,\phi_l]  \rho^{-1} u} q^{*}  \theta_{\alpha_1,  \dotso ,\alpha_{n}}\\
&= \zeta_k^{ \sum_{d=1}^{n} g_d M_d} \left(  \int_{\rho^{-1} u}^{[\phi_j,\phi_l]  u}  + \int_{ [\phi_j,\phi_l]   u}^{[\phi_j,\phi_l]  \rho^{-1} u}  \right)q^{*}  \theta_{\alpha_1,  \dotso ,\alpha_{n}}\\
&= \zeta_k^{ \sum_{d=1}^{n} g_d M_d}  \int_{u}^{[\phi_j,\phi_l] u} q^{*}  \theta_{\alpha_1,  \dotso ,\alpha_{n}}.
\end{align*}

\qed
}

\begin{lemma}\label{separa}
For each $j,l \in \{1,...,n \}$  we have 
\begin{align*}
\int_{\iota \circ q \circ l_{[\phi_j,\phi_l]}} \theta_{\alpha_1,  \dotso ,\alpha_{n}} &=  (1 - \zeta_k^{M_l}) \int_{u}^{\phi_j u}q^{*}  \theta_{\alpha_1,  \dotso ,\alpha_{n}}\\  &- (1- \zeta_k^{M_j}) \int_{u}^{\phi_l u}q^{*}  \theta_{\alpha_1,  \dotso ,\alpha_{n}}.
\end{align*}
\end{lemma}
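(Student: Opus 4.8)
The plan is to expand the commutator $[\phi_j,\phi_l] = \phi_j \phi_l \phi_j^{-1}\phi_l^{-1}$ and repeatedly apply the cocycle-type identity
$$\int_{u}^{\phi \circ \psi (u)} \omega = \int_{u}^{\phi (u)}\omega + \int_{u}^{\psi (u)} \phi^{*} \omega$$
stated just before this lemma, together with the eigenvector relation $\phi^* q^*\theta = \zeta_k^{M(\phi)} q^*\theta$ from Lemma \ref{eigen}, where I abbreviate $M(\phi_j) = M_j$ and $M(\phi_j^{-1}) = -M_j$. For brevity write $\omega := q^*\theta_{\alpha_1,\dotso,\alpha_n}$ and $I(\phi) := \int_u^{\phi(u)}\omega$; the identity then reads $I(\phi\psi) = I(\phi) + \zeta_k^{M(\phi)} I(\psi)$, since pulling back $\omega$ by $\phi$ multiplies the integrand by the scalar $\zeta_k^{M(\phi)}$ and the integral $\int_u^{\psi(u)}$ is unchanged. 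Note also $I(\phi^{-1}) = -\zeta_k^{-M(\phi)} I(\phi)$, obtained from $0 = I(\mathrm{id}) = I(\phi\phi^{-1}) = I(\phi) + \zeta_k^{M(\phi)} I(\phi^{-1})$.

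First I would compute step by step. Applying the identity to $\phi_j \cdot (\phi_l\phi_j^{-1}\phi_l^{-1})$ gives $I([\phi_j,\phi_l]) = I(\phi_j) + \zeta_k^{M_j}\, I(\phi_l\phi_j^{-1}\phi_l^{-1})$. Then $I(\phi_l\phi_j^{-1}\phi_l^{-1}) = I(\phi_l) + \zeta_k^{M_l}\, I(\phi_j^{-1}\phi_l^{-1})$, and $I(\phi_j^{-1}\phi_l^{-1}) = I(\phi_j^{-1}) + \zeta_k^{-M_j}\, I(\phi_l^{-1}) = -\zeta_k^{-M_j} I(\phi_j) - \zeta_k^{-M_j - M_l} I(\phi_l)$. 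Substituting back and collecting the coefficients of $I(\phi_j)$ and $I(\phi_l)$: the coefficient of $I(\phi_j)$ is $1 + \zeta_k^{M_j}\zeta_k^{M_l}(-\zeta_k^{-M_j}) = 1 - \zeta_k^{M_l}$, and the coefficient of $I(\phi_l)$ is $\zeta_k^{M_j} - \zeta_k^{M_j}\zeta_k^{M_l}\zeta_k^{-M_j-M_l} = \zeta_k^{M_j} - 1 = -(1-\zeta_k^{M_j})$. This yields exactly the claimed formula, after recalling that $\int_{\iota\circ q\circ l_{[\phi_j,\phi_l]}}\theta_{\alpha_1,\dotso,\alpha_n} = I([\phi_j,\phi_l])$ by the discussion preceding the lemma.

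The only genuinely delicate point is bookkeeping: one must keep careful track of which pullback scalar is attached to each partial integral, and in particular that $M(\phi^{-1}) = -M(\phi)$ (consistent with $L(p_i,\phi^{-1}) = -L(p_i,\phi)$, which follows from the homomorphism property \eqref{eq:homo}), and that a conjugation or inversion does not change the base path $\int_u^{\psi(u)}$ but only rescales the integrand. I would present the computation as a short chain of equalities using the cocycle identity, being careful not to break the display with a blank line. No further machinery is needed: this is a purely formal consequence of Lemma \ref{eigen} and the additivity identity on the simply connected domain $U$.
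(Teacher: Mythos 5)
Your proof is correct and follows essentially the same route as the paper: expand the commutator $\phi_j\phi_l\phi_j^{-1}\phi_l^{-1}$, apply the additivity identity on the simply connected $U$ at each stage, and use the eigenvalue relation of Lemma \ref{eigen} to pull out the scalars and collect the coefficients of $\int_u^{\phi_j u}$ and $\int_u^{\phi_l u}$. Your inversion formula $I(\phi^{-1})=-\zeta_k^{-M(\phi)}I(\phi)$ is in fact the correct one (the corresponding displayed identities in the paper's proof carry a sign slip in the exponent, writing $-\zeta_k^{M_j}$ where $-\zeta_k^{-M_j}$ is needed), and your bookkeeping lands exactly on the stated coefficients $1-\zeta_k^{M_l}$ and $-(1-\zeta_k^{M_j})$.
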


\proof{Again from Lemma \ref{eigen} we have the relations
\begin{align*}
\int_{u}^{[\phi_j, \phi_l] u} q^{*} \theta_{\alpha_1,  \dotso ,\alpha_{n}}&= \int_{u}^{\phi_j u} q^{*} \theta_{\alpha_1,  \dotso ,\alpha_{n}}+ \int_{\phi_j u}^{ \phi_j \phi _l \phi_j^{-1} \phi_l^{-1} u} q^{*} \theta_{\alpha_1,  \dotso ,\alpha_{n}}\\
&= \int_{u}^{\phi_j u} q^{*} \theta_{\alpha_1,  \dotso ,\alpha_{n}}+ \int_{u}^{ \phi _l \phi_j^{-1} \phi_l^{-1} u} \phi_j^{*}q^{*} \theta_{\alpha_1,  \dotso ,\alpha_{n}}\\
 &= \int_{u}^{\phi_j u} q^{*}\theta_{\alpha_1,  \dotso ,\alpha_{n}}+ \zeta_k^{M_j} \int_{u}^{ \phi _l \phi_j^{-1} \phi_l^{-1} u} q^{*} \theta_{\alpha_1,  \dotso ,\alpha_{n}}
\end{align*}
and
\begin{align*}
\int_{u}^{ \phi _l \phi_j^{-1} \phi_l^{-1} u} q^{*} \theta_{\alpha_1,  \dotso ,\alpha_{n}}&= \int_{u}^{\phi_l u} q^{*} \theta_{\alpha_1,  \dotso ,\alpha_{n}}+ \int_{\phi_l u}^{\phi_l \phi_j^{-1} \phi_l^{-1} u} q^{*} \theta_{\alpha_1,  \dotso ,\alpha_{n}}\\
&= \int_{u}^{\phi_l u} q^{*} \theta_{\alpha_1,  \dotso ,\alpha_{n}}+ \int_{ u}^{ \phi_j^{-1} \phi_l^{-1} u} \phi_l^{*}q^{*} \theta_{\alpha_1,  \dotso ,\alpha_{n}}\\
&= \int_{u}^{\phi_l u} q^{*} \theta_{\alpha_1,  \dotso ,\alpha_{n}}+ \zeta_k^{M_l}\int_{ u}^{ \phi_j^{-1} \phi_l^{-1} u} q^{*} \theta_{\alpha_1,  \dotso ,\alpha_{n}}.
\end{align*}
Doing the same for the integral from $u$ to $\phi_j^{-1}\phi_l^{-1}u$ and observing that
$$\int_{ u}^{  \phi_l^{-1} u} (\phi_j^{-1})^{*}q^{*} \theta_{\alpha_1,  \dotso ,\alpha_{n}}= \zeta_k^{-M_j}\int_{ u}^{  \phi_l^{-1} u} q^{*} \theta_{\alpha_1,  \dotso ,\alpha_{n}}, $$
$$\int_{ u}^{  \phi_j^{-1} u} q^{*} \theta_{\alpha_1,  \dotso ,\alpha_{n}}= -\zeta_k^{M_j}  \int_{ u}^{  \phi_j u} q^{*} \theta_{\alpha_1,  \dotso ,\alpha_{n}}  $$
and
$$\int_{ u}^{  \phi_l^{-1} u} q^{*} \theta_{\alpha_1,  \dotso ,\alpha_{n}}= -\zeta_k^{M_l}  \int_{ u}^{  \phi_l u} q^{*} \theta_{\alpha_1,  \dotso ,\alpha_{n}},  $$
the result follows.\\
\qed}
 
 Finally we reduced the problem to computing
 $$ \int_{u}^{\phi_i u}q^{*} \theta_{\alpha_1,  \dotso ,\alpha_{n}}\quad \text{ for } 1\leq i \leq n . $$
 
 \begin{lemma}\label{expliciti}
Fix $u\in U$ and let $z_0 = p(u) \in \bC\setminus R$ with 
$$R = \{ r_1 = 0, r_2 = 1 , r_3 = \lambda_1, \dotso ,r_n = \lambda_{n-2} \}.$$ Then for each $i \in \{1,...,n \}$ we have
$$ \int_{u}^{\phi_i u}q^{*} \theta_{\alpha_1,  \dotso ,\alpha_{n}}=  -\frac{1}{k}(1- \zeta_k^{M_i}) \int_{z_0}^{r_i} (-w)^{\frac{\alpha_1+1}{k} -1}\prod_{t=2}^{n}(w-r_t)^{-\alpha_t/k} dw,$$
where the choice of the branch is determined by the preimage $u$ of $z_0$.
\end{lemma}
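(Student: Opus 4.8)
The plan is to evaluate the integral of the pullback form $q^{*}\theta_{\alpha_1,\dots,\alpha_n}$ along a path from $u$ to $\phi_i u$ by pushing it down to a computation in the punctured plane $\bC\setminus R$ via the universal covering $p\colon U\to \bC-R$. By Lemma \ref{eigen}, Equation \eqref{eq:pback1}, we have
\[
q^{*}\theta_{\alpha_1,\dots,\alpha_n} = \frac{\hat{x}_1^{\alpha_1}\,d\hat{x}_1}{\hat{x}_2^{\alpha_2}\cdots\hat{x}_n^{\alpha_n}},
\]
and since $\hat{x}_j = \exp(\tfrac{1}{k}\log p_j)$, this form is the pullback under $p$ of a (multivalued) $1$-form on $\bC\setminus R$, namely $\tfrac{1}{k}(-w)^{(\alpha_1+1)/k-1}\prod_{t=2}^{n}(w-r_t)^{-\alpha_t/k}\,dw$; the point is that $d\hat{x}_1 = \tfrac{1}{k}\hat{x}_1 \,d\log p_1 = -\tfrac{1}{k}\hat{x}_1 \, \tfrac{dp}{p}$, so after collecting all the $\hat{x}$-powers the integrand becomes a definite branch of this form along $p\circ l_{\phi_i}$. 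First I would make this rewriting explicit, being careful that the chosen branch of each fractional power is precisely the one dictated by the value $\hat{x}_j(u)$, i.e. by the chosen preimage $u$ of $z_0$.

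Next I would handle the fact that $p\circ l_{\phi_i}$ is a \emph{loop} in $\bC\setminus R$ (based at $z_0$, homotopic to a small circle around $r_i$ by the normalization $L(p_j,\phi_i)=\delta_{ij}$), not a path from $z_0$ to $r_i$. The standard device, exactly as in Rohrlich \cite{GR}, is to deform this loop: write it as "go from $z_0$ nearly to $r_i$ along a segment, encircle $r_i$ once, return along the same segment." On the outbound segment the integrand takes one branch; after encircling $r_i$ once, only the factor $(w-r_i)^{-\alpha_i/k}$ picks up the monodromy factor $\zeta_k^{-\alpha_i} = \zeta_k^{M_i}$ (here using $M_i=-\alpha_{i+1}$ for $2\le i\le n$ and $M_1=\alpha_1+1$, which matches the exponent $(\alpha_1+1)/k-1$ of $(-w)$ for the $i=1$ case after accounting for the sign $p_1=-p$), so the return integral contributes $-\zeta_k^{M_i}$ times the outbound one. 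Hence the loop integral equals $(1-\zeta_k^{M_i})$ times $-\tfrac{1}{k}\int_{z_0}^{r_i}(-w)^{(\alpha_1+1)/k-1}\prod_{t=2}^{n}(w-r_t)^{-\alpha_t/k}\,dw$, where the contributions of the small circle around $r_i$ vanish in the limit because $-\alpha_i/k > -1$ makes the integrand integrable there. This yields precisely the claimed formula.

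The main obstacle I expect is bookkeeping of branches and signs: one must verify that the single factor $(w-r_i)^{-\alpha_i/k}$ (and no other) is responsible for the monodromy around $r_i$, that the accumulated phase is exactly $\zeta_k^{M_i}$ with the correct sign conventions (in particular the $p_1=-p$ twist and the shifted exponent on $(-w)$ for $i=1$), and that the degenerate boundary behavior at $r_i$ and at $z_0$ causes no divergence—this last point needs $\mathrm{Re}(-\alpha_i/k) > -1$, i.e. $0\le \alpha_i\le k-1$, which is part of the hypothesis $(\alpha_1,\dots,\alpha_n)\in I_{k,n}$. Once the branch of the integrand is pinned down by $u$, the rest is the classical deformation argument; I would present it by reducing the loop integral over $p\circ l_{\phi_i}$ to a "dogbone" type contour and taking the limit as the contour collapses onto the segment $[z_0,r_i]$.
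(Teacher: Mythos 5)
Your proposal is correct and follows essentially the same route as the paper: push the integral down to $\bC\setminus R$ via $w=-\hat{x}_1^k=p$, deform the loop around $r_i$ into a segment--small circle--return segment contour, extract the factor $(1-\zeta_k^{M_i})$ from the monodromy of the branch, and let the circle radius tend to $0$ using that the exponents exceed $-1$. The paper carries this out explicitly only for $i=1$ (with the circle $\beta_\epsilon$ and the branch jump $e^{2\pi i}\beta$) and declares the other cases analogous, which is exactly your argument.
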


\proof{
Since $-\hat{x}_1^k = p $, making a change of variable $w =  -\hat{x}_1^k $ we obtain
$$\int_{u}^{\phi_i u}q^{*} \theta_{\alpha_1,  \dotso ,\alpha_{n}}= -\frac{1}{k}\int_{\gamma_i}(-w)^{\frac{\alpha_1+1}{k} -1}\prod_{t=2}^{n}(w-r_t)^{-\alpha_t/k} dw,$$
where $\gamma_i$ is the projection by $p$ of the curve from $u$ to $\phi_i(u)$, i.e., $\gamma_i$ is an element of $\pi_1(\bC- R,z_0)$ that surrounds $r_i$ with index $1$ .
Choose $s_0\in [0,1)$ such that $z_0 =|z_0| e^{2\pi i s_0}$, and consider the circle with center $0$ and radius $\epsilon >0$ given by $\beta_{\epsilon}(s) = \epsilon e^{2\pi i(s+s_0)}$. Let $\beta$ be the line from $z_0$ to $z_{\epsilon} \in \beta_{\epsilon} \cap \overline{z_0,0}$. Thus $\gamma_1$ is homotopic to $\beta + \beta_{\epsilon} - e^{2\pi i}\beta$, where the factor $e^{2\pi i}$ is due to the continuation of the argument through the critic line $(-\infty, 0] $, as we see in Figure \ref{fig:bibi}.
 \begin{figure}[h]
\begin{center}
\includegraphics[scale = 0.45]{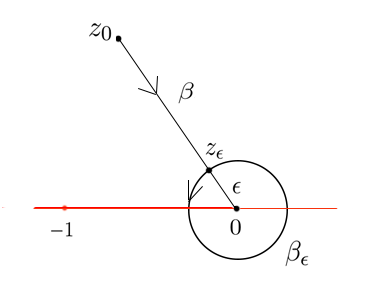}
\end{center}
\caption{}
\label{fig:bibi}
\end{figure}

\noindent Then
\begin{align*}
-k\int_{u}^{\phi_1 u} q^{*}  \theta_{\alpha_1,  \dotso ,\alpha_{n}} &= (1 - \zeta_k^{\alpha_1+1}) \int_{z_0}^{z_\epsilon} (-w)^{\frac{\alpha_1+1}{k} -1}\prod_{t=2}^{n}(w-r_t)^{-\alpha_t/k}dw\\
&+ \int_{\beta_{\epsilon}} (-w)^{\frac{\alpha_1+1}{k} -1}\prod_{t=2}^{n}(w-r_t)^{-\alpha_t/k}dw.
\end{align*}
For small $\epsilon$ the maps $(\epsilon e^{2\pi i(s+s_0)} - \lambda_t)^{-\alpha_{t}/k}$ are continuous on $[0,1]$, thus bounded. So there exists a positive constant $C$ independent of $\epsilon$ such that
\begin{align*}
&\left| \int_{\beta_{\epsilon}} (-w)^{\frac{\alpha_1+1}{k} -1}\prod_{t=2}^{n}(w-r_t)^{-\alpha_t/k} dw\right|\\
&= \left|2\pi (-\epsilon)^{\frac{\alpha_1+1}{k}}\int_{0}^{1} \frac{e^{\frac{2\pi i(\alpha_1+1)(s+ s_0)}{k}}}{ \prod_{t=2}^{n}(\epsilon e^{2\pi i(s+s_0)} - \lambda_t)^{\alpha_{t}/k}} ds \right|\\
&\leq 2\pi \epsilon^{\frac{\alpha_1+1}{k}} C.
\end{align*}

\noindent Since $\frac{\alpha_1+1}{k}, -\alpha_i/k$ are larger than $-1$ for each $i\geq 2$, in the limit $\epsilon \to 0$ we obtain
$$ \int_{u}^{\phi_1 u} q^{*} \theta_{\alpha_1,  \dotso ,\alpha_{n}} = -\frac{(1 - \zeta_k^{\alpha_1+1})}{k} \int_{z_0}^{0} (-w)^{\frac{\alpha_1+1}{k} -1}\prod_{t=2}^{n}(w-r_t)^{-\alpha_t/k} dw.$$
For $\gamma_i$ with $i \geq 2$ we apply an analogous argument.
\qed
}
\vspace{5pt}

\begin{remark}
For the integral $$\int_{z_0}^{r_i} (-w)^{\frac{\alpha_1+1}{k} -1}\prod_{t=2}^{n}(w-r_t)^{-\alpha_t/k} dw,$$
the convergence is given by the fact that $\frac{\alpha_1+1}{k}, -\alpha_j/k$ are larger than $-1$ for each $j$, so the maps $|w - r_j|^{M_j/k}$ with $i\neq j$ are well defined, continuous and
bounded when $z_0$ is in a neighborhood of $r_i$.
\end{remark}

 \begin{theorem} Let $$R = \{r_1 = 0, r_2 = 1 , r_3 = \lambda_1, \dotso ,r_n = \lambda_{n-2} \}$$
 be the set of branch points of $(C_{k,n},H_0)$ distinct of $\infty$. If we denote
$$W(R ,  \vec{\alpha})(w) :=  (-w)^{\frac{\alpha_1+1}{k} -1}\prod_{t=2}^{n}(w-r_t)^{-\alpha_t/k}  $$
for each $\vec{\alpha} = (\alpha_1,...,\alpha_{n}) \in I_{k,n}$, then the period lattice $\Lambda \cong \tau(H_1(C_{k,n}, \bZ))$ is generated by the period vectors
$$\left( \zeta_k^{ \sum_{d=1}^{n} g_d M_d}\frac{ (1 - \zeta_k^{M_j})(1-  \zeta_k^{M_l})}{k} \int_{r_j}^{r_l} W(R , \vec{\alpha}) dw  \right)_{\vec{\alpha} \in I_{k,n}}$$
for each generator $ \rho [\phi_j , \phi_l ] \rho^{-1} \in H_1(C_{k,n} , \bZ)$ with $\rho = \prod_{d=1}^{n} \phi_d^{g_d}$ and $0 \leq g_d \leq k-1$.
\end{theorem}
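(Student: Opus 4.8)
\noindent\emph{Proof proposal.} The plan is to assemble the preceding lemmas, since all the analytic work has already been done and only bookkeeping remains. First I would invoke the corollary following Theorem~\ref{teoremaHpinch}: the images under the homomorphism induced by $\iota$ of the generators of $H_1(C_{k,n}',\bZ)$ listed in Theorem~\ref{teoremaHpinch} generate $H_1(C_{k,n},\bZ)$, hence their $\tau$-images span the period lattice $\Lambda$. That generating set splits into the classes of the $\phi_i^k$ and of the elements $\sigma=\rho[\phi_j,\phi_l]\rho^{-1}$ with $\rho=\prod_{d=1}^{n}\phi_d^{g_d}$, $1\le j<l\le n$ and $0\le g_d\le k-1$. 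Since the lemma preceding Lemma~\ref{sumasuma} gives $\int_{\iota\circ q\circ l_{\phi_i^k}}\theta_{\alpha_1,\dots,\alpha_n}=0$ for every $\vec{\alpha}\in I_{k,n}$, the period vector attached to each $\phi_i^k$ is the zero vector, so I would drop these and keep only the commutator-type generators.

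Next I would fix $\vec{\alpha}=(\alpha_1,\dots,\alpha_n)\in I_{k,n}$ and a generator $\sigma=\rho[\phi_j,\phi_l]\rho^{-1}$, and evaluate the coordinate $\int_{\iota\circ q\circ l_{\sigma}}\theta_{\alpha_1,\dots,\alpha_n}$ by chaining three lemmas. Lemma~\ref{sumasuma} pulls out the scalar $\zeta_k^{\sum_{d=1}^{n}g_d M_d}$ and reduces the integral to $\int_{\iota\circ q\circ l_{[\phi_j,\phi_l]}}\theta_{\alpha_1,\dots,\alpha_n}$; Lemma~\ref{separa} rewrites this as $(1-\zeta_k^{M_l})\int_{u}^{\phi_j u}q^{*}\theta_{\alpha_1,\dots,\alpha_n}-(1-\zeta_k^{M_j})\int_{u}^{\phi_l u}q^{*}\theta_{\alpha_1,\dots,\alpha_n}$; and Lemma~\ref{expliciti} evaluates $\int_{u}^{\phi_i u}q^{*}\theta_{\alpha_1,\dots,\alpha_n}=-\tfrac{1}{k}(1-\zeta_k^{M_i})\int_{z_0}^{r_i}W(R,\vec{\alpha})\,dw$, with the branch of $W(R,\vec{\alpha})$ fixed by the chosen preimage $u$ of $z_0=p(u)$. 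Substituting the third identity into the second, the two summands acquire the common factor $\tfrac{1}{k}(1-\zeta_k^{M_j})(1-\zeta_k^{M_l})$ and the remaining bracket collapses to $\int_{z_0}^{r_j}W(R,\vec{\alpha})\,dw-\int_{z_0}^{r_l}W(R,\vec{\alpha})\,dw$, which is $\int W(R,\vec{\alpha})\,dw$ along the path from $r_l$ to $z_0$ followed by the path from $z_0$ to $r_j$; reversing orientation and reinserting the scalar from Lemma~\ref{sumasuma} yields exactly
$$\zeta_k^{\sum_{d=1}^{n}g_d M_d}\,\frac{(1-\zeta_k^{M_j})(1-\zeta_k^{M_l})}{k}\int_{r_j}^{r_l}W(R,\vec{\alpha})\,dw,$$
and letting $\vec{\alpha}$ run over $I_{k,n}$ assembles the period vector in the statement. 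Convergence of this improper integral at $w=r_j$ and $w=r_l$ follows from $\tfrac{\alpha_1+1}{k}>-1$ and $-\alpha_t/k>-1$, as already observed in the remark after Lemma~\ref{expliciti}.

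I do not expect a genuine obstacle: the theorem is essentially a corollary of Lemmas~\ref{sumasuma}, \ref{separa} and \ref{expliciti} together with the vanishing of the periods over the $\phi_i^k$. The points that demand care are clerical rather than conceptual: keeping track of signs, and keeping the endpoint labels $r_j,r_l$ correctly matched with the factors $1-\zeta_k^{M_j}$ and $1-\zeta_k^{M_l}$ when composing Lemmas~\ref{separa} and \ref{expliciti} (the index swap in Lemma~\ref{separa} is precisely what makes the final product symmetric in $j$ and $l$); and making precise that $\int_{r_j}^{r_l}W(R,\vec{\alpha})\,dw$ denotes integration along the concatenated path through $z_0$ carrying the branch inherited from $u$, since a homotopically inequivalent path would in general alter the value by the monodromy of $W(R,\vec{\alpha})$. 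I would also state explicitly that the resulting family is only a spanning set for $\Lambda$ and need not be a $\bZ$-basis, in line with the earlier discussion of generating sets of $H_1(S,\bZ)$.
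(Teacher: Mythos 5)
Your proposal is correct and follows essentially the same route as the paper: it chains Lemmas~\ref{sumasuma}, \ref{separa} and \ref{expliciti} together with the vanishing of the periods over the $\phi_i^k$ and the corollary on generators of $H_1(C_{k,n},\bZ)$. Your bookkeeping of the sign (obtaining $+\int_{r_j}^{r_l}$ after combining $\int_{z_0}^{r_l}-\int_{z_0}^{r_j}$) actually matches the theorem's statement more closely than the displayed computation in the paper, which carries an overall minus sign; this is immaterial for the lattice, as you note.
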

\proof{ From Lemmas \ref{separa} and \ref{expliciti} we obtain
 
 \begin{align*}
&\int_{u}^{[\phi_j , \phi_l] u} q^{*} \theta_{\alpha_1,  \dotso ,\alpha_{n}}\\
 &= -\frac{(1 - \zeta_k^{M_j})(1-  \zeta_k^{M_l})}{k}   \int_{r_j}^{r_l}  (-w)^{\frac{\alpha_1+1}{k} -1}\prod_{t=2}^{n}(w-r_t)^{-\alpha_t/k}dw.
\end{align*}
Thus by Lemma \ref{sumasuma} for each generator $\sigma = \rho [\phi_j , \phi_l ] \rho^{-1} \in H_1(C_{k,n} , \bZ)$ with $\rho = \prod_{d=1}^{n} \phi_d^{g_d}$ we have

\begin{align*}
&\int_{\iota \circ q \circ l_{\sigma}} \theta_{\alpha_1,  \dotso ,\alpha_{n}}\\  
&=- \zeta_k^{ \sum_{d=1}^{n} g_d M_d}\frac{ (1 - \zeta_k^{M_j})(1-  \zeta_k^{M_l})}{k} \int_{r_j}^{r_l} (-w)^{\frac{\alpha_1-1}{k} -1}\prod_{t=2}^{n}(w-r_t)^{-\alpha_t/k}dw
\end{align*}
with  $1\leq j<l \leq n$ and $0 \leq g_d \leq k-1$.
\qed}

\begin{remark}
In the case of the classical Fermat curves $C_{k,2}$ with $R = \{r_1= 0,r_2 = 1\}$, the integrals to compute are
$$ \int_{0}^{1}    \frac{(-w)^{\frac{\alpha_1+1}{k}-1 } dw}{(w-1)^{\alpha_{2}/k} } = -\eta^{\alpha_1- \alpha_{2}+1} \int_{0}^{1}  w^{\frac{\alpha_1+1}{k}-1 }(1-w)^{-\alpha_{2}/k} dw,$$
where $\eta = (-1)^{1/k}$. If we consider the Beta function
$$B(x,y) = \int_{0}^{1} t^{x-1}(1-t)^{y-1},\quad \mbox{Re}(x), \mbox{Re}(y) >0,$$
then
$$  \int_{0}^{1}    \frac{(-w)^{\frac{\alpha_1+1}{k}-1 } dw}{(w-1)^{\alpha_{1}/k} } =- \eta^{\alpha_1 - \alpha_{2}+1}B\left(\frac{\alpha_1+1}{k} , 1 - \frac{\alpha_{2}}{k} \right),$$
which yields a result similar to that of Rohrlich in \cite{GR} for the standard Fermat curve $X^k + Y^k = Z^k$. 
In the case of the generalized Fermat curve we need to compute 
$$ \int_{\lambda_j}^{\lambda_l}    \frac{(-w)^{\frac{\alpha_1+1}{k}-1 } dw}{(w-1)^{\alpha_{2}/k} (w-\lambda_1)^{\alpha_{3}/k}\cdots (w - \lambda_{n-2})^{\alpha_{n}/k}},$$
which we can view as a natural generalization of the Beta function.
\end{remark}

\section*{Acknowledgements}
These results was obtained in my Master degree where my advisor was Mariela Carvacho. I thank Rub\'en Hidalgo for his helpful suggestions and comments. I would also like to thank the referee for her/his valuable comments which improved the
presentation of the paper and saved us from several mistakes. This work was partially supported by Anillo PIA ACT1415 and Proyecto Interno USM 116.12.2.




\begin{thebibliography}{10}
\bibitem{GR} Gross. B (appendix by Rohrlich. D), \textit{On the periods of Abelian Integrals and a Formula of Chowla and Selberg}, Inventiones mathematicae \textbf{45}, 1978, 193-211.

\bibitem{GHL} Gonzalez-Diez. G, Hidalgo. R, Leyton. M, \textit{Generalized Fermat curves}, Journal of Algebra \textbf{321},  2009,  1643-1660. 

\bibitem{RH} Hidalgo. R, \textit{Holomorphic differentials of Generalized Fermat curves}, arXiv:\href{https://arxiv.org/abs/1710.01349}{1710.01349}, 2017.

\bibitem{LA} Lang. S, \textit{Introduction to Algebraic and Abelian Functions}, Graduate Texts in Mathematics, Vol. \textbf{89}. Springer-Verlag, New York-Heidelberg, 1982.

\bibitem{TZ} Tzermias. P, \textit{The Group of Automorphism of the Fermat Curve}, Journal of Number Theory \textbf{53}, 1995, 173-178.

\bibitem{KZ1} Kopeliovich. Y, Zemel. S, \textit{On Spaces Associated with Invariant Divisors
on Galois Covers of Riemann Surfaces and Their Applications}, Israel Journal of Mathematics \textbf{234}, 2019, 393-450.

\bibitem{KZ2} Kopeliovich. Y, Zemel. S, \textit{Thomae formula for Abelian
covers of $\bC\bP^1$}, Transactions of the American Mathematical Society \textbf{372}, 2019, 7025-7069.


\end{thebibliography}


\end{document}